\documentclass{amsart}
\usepackage{amsmath,amsthm, amssymb, mathtools, xypic}
\usepackage{cite,  url,  mathscinet}
\usepackage[colorlinks]{hyperref}
\usepackage{enumitem}

 \usepackage{scalerel, stackengine}
 
\begin{document}

\renewcommand{\AA}{\mathbb{A}}

\renewcommand\widetilde[1]{\ThisStyle{%
  \setbox0=\hbox{$\SavedStyle#1$}%
  \stackengine{-.1\LMpt}{$\SavedStyle#1$}{%
    \stretchto{\scaleto{\SavedStyle\mkern.2mu\sim}{.5467\wd0}}{.7\ht0}%
  }{O}{c}{F}{T}{S}%
}}

\newcommand{\djunion}{\mathop{\bigsqcup}}
\newcommand{\BB}{\mathbb{B}}
\newcommand{\DD}{\mathbb{D}}
\newcommand{\KK}{\mathbb{K}}
\newcommand{\QQ}{\mathbb{Q}}
\newcommand{\ZZ}{\mathbb{Z}}
\newcommand{\FF}{\mathbb{F}}
\newcommand{\LL}{\mathbb{L}}
\newcommand{\NN}{\mathbb{N}}
\newcommand{\EE}{\mathbb{E}}
\newcommand{\HH}{\mathbb{H}}
\newcommand{\RR}{\mathbb{R}}
\newcommand{\PP}{\mathbb{P}}
\newcommand{\TT}{\mathbb{T}}
\renewcommand{\SS}{\mathbb{S}}
\newcommand{\CC}{\mathbb{C}}
\newcommand{\GG}{\mathbb{G}}
\newcommand{\WW}{\mathbb{W}}

\newcommand{\Oc}{\mathcal{O}}
\newcommand{\Ac}{\mathcal{A}}
\newcommand{\Cc}{\mathcal{C}}
\newcommand{\Ec}{\mathcal{E}}
\newcommand{\Fc}{\mathcal{F}}
\newcommand{\Sc}{\mathcal{S}}
\newcommand{\Gc}{\mathcal{G}}
\newcommand{\Hc}{\mathcal{H}}
\newcommand{\Pc}{\mathcal{P}}

\renewcommand{\Mc}{\mathcal{M}}
\newcommand{\Vc}{\mathcal{V}}

\newcommand{\coker}{\mathrm{coker}}
\newcommand{\gf}{\mathfrak{g}}
\newcommand{\pf}{\mathfrak{p}}
\newcommand{\mf}{\mathfrak{m}}
\newcommand{\qf}{\mathfrak{q}}
\newcommand{\kf}{\mathfrak{k}}
\newcommand{\hf}{\mathfrak{h}}
\newcommand{\Xf}{\mathfrak{X}}

\newcommand{\tensor}{\otimes}
\newcommand{\comptensor}{\widehat{\otimes}}

\newcommand{\cont}{\mathrm{cont}}
\newcommand{\Newt}{\mathrm{Newt}}

\newcommand{\actsonr}{\mathrel{\reflectbox{$\righttoleftarrow$}}}
\newcommand{\actsonl}{\mathrel{\reflectbox{$\lefttorightarrow$}}}

\newcommand{\isoeq}{\cong}
\newcommand{\cech}{\vee}
\newcommand{\dsum}{\mathop{\oplus}}

\newcommand{\Tr}{\mathrm{Tr}}
\newcommand{\Gal}{\mathrm{Gal}}

\newcommand{\Mod}{\mathrm{Mod}}
\newcommand{\Rep}{\mathrm{Rep}}
\newcommand{\Hecke}{\mathrm{Hecke}}
\newcommand{\Fil}{\mathrm{Fil}}
\newcommand{\Qlog}{\mathrm{Qlog}}
\newcommand{\Image}{\mathrm{Im}}
\newcommand{\Ker}{\mathrm{ker}}
\newcommand{\Id}{\mathrm{Id}}
\newcommand{\std}{\mathrm{std}}
\newcommand{\dR}{\mathrm{dR}}
\newcommand{\Gr}{\mathrm{Gr}}
\newcommand{\Aut}{\mathrm{Aut}}
\newcommand{\Pic}{\mathrm{Pic}}
\newcommand{\Div}{\mathrm{Div}}
\newcommand{\Frob}{\mathrm{Frob}}
\newcommand{\Proj}{\mathrm{Proj}}
\newcommand{\Spa}{\mathrm{Spa}}
\newcommand{\Spf}{\mathrm{Spf}}
\newcommand{\Spec}{\mathrm{Spec}}
\newcommand{\Hom}{\mathrm{Hom}}
\newcommand{\Isom}{\mathrm{Isom}}
\newcommand{\proet}{\mathrm{pro\acute{e}t}}
\newcommand{\Lie}{\mathrm{Lie}}
\newcommand{\perf}{\mathrm{perf}}
\renewcommand{\Hecke}{\mathrm{Hecke}}

\newcommand{\IC}{\mathrm{IC}}

\newcommand{\Map}{\mathrm{Map}}
\newcommand{\Ext}{\mathrm{Ext}}

\newcommand{\Homc}{\mathrm{Hom}_\mathrm{cont}}

\newcommand{\height}{\mathrm{ht}}
\newcommand{\un}{\mathrm{un}}
\newcommand{\et}{\mathrm{\acute{e}t}}
\newcommand{\profet}{\mathrm{prof\acute{e}t}}
\newcommand{\an}{\mathrm{an}}
\newcommand{\cris}{\mathrm{cris}}
\newcommand{\cyc}{\mathrm{cyc}}
\newcommand{\ad}{\mathrm{ad}}
\newcommand{\coh}{\mathrm{coh}}
\newcommand{\GL}{\mathrm{GL}}
\newcommand{\union}{\bigcup}

\newcommand{\rank}{\mathrm{rank}}
\newcommand{\ord}{\mathrm{ord}}
\newcommand{\Cont}{\mathrm{Cont}}
\newcommand{\Bun}{\mathrm{Bun}}
\newcommand{\piF}{\varpi_F}
\newcommand{\Irred}{\mathrm{Irred}}

\newcommand{\Ad}{\mathrm{Ad}}

\newcommand{\tr}{\mathrm{tr}}

\newcommand{\Conf}{\mathrm{Conf}}
\newcommand{\PConf}{\mathrm{PConf}}
\newcommand{\Sym}{\mathrm{Sym}}
\newcommand{\End}{\mathrm{End}}

\DeclarePairedDelimiter\floor{\lfloor}{\rfloor}

\theoremstyle{plain}

\newtheorem{maintheorem}{Theorem}
\renewcommand{\themaintheorem}{\Alph{maintheorem}} 
\newtheorem{maincorollary}[maintheorem]{Corollary}

\newtheorem*{theorem*}{Theorem}

\newtheorem{theorem}{Theorem}[section]
\newtheorem{conjecture}[theorem]{Conjecture}
\newtheorem{proposition}[theorem]{Proposition}
\newtheorem{lemma}[theorem]{Lemma}
\newtheorem{corollary}[theorem]{Corollary}

\theoremstyle{definition}

\newtheorem{example}[theorem]{Example}
\newtheorem{definition}[theorem]{Definition}
\newtheorem{remark}[theorem]{Remark}
\newtheorem{question}[theorem]{Question}

\newtheorem*{principle*}{Principle}

\newcommand{\motcomp}{\widehat{\Mc_\LL}} 
\newcommand{\mot}{\Mc}
\newcommand{\Var}{\mathrm{Var}}
\newcommand{\van}{\mathrm{van}}
\newcommand{\HS}{\mathrm{HS}} 
\newcommand{\MSSP}{\mathrm{MSSP}}
\newcommand{\FinEt}{\mathrm{Fin\acute{E}t}}
\newcommand{\GVSH}{\mathrm{GVSH}}

\newcommand{\Pow}{\mathbf{Pow}}

\title[M.R.V. and rep. stability I: Configuration spaces]{Motivic random variables and representation stability I: Configuration spaces}
\author{Sean Howe}
\email{howe@math.utah.edu}
\address{Department of Mathematics, University of Utah, Salt Lake City, UT 84112}

\begin{abstract}
We prove a motivic stabilization result for the cohomology of the local systems on configuration spaces of varieties over $\CC$ attached to character polynomials. Our approach interprets the stabilization as a probabilistic phenomenon based on the asymptotic independence of certain \emph{motivic random variables}, and gives explicit universal formulas for the limits in terms of the exponents of a motivic Euler product for the Kapranov zeta function. The result can be thought of as a weak but explicit version of representation stability for the cohomology of ordered configuration spaces. In the sequel, we find similar stability results in spaces of smooth hypersurface sections, providing new examples to be investigated through the lens of representation stability for symmetric, symplectic and orthogonal groups. 
\end{abstract}

\maketitle

\tableofcontents

\section{Introduction}

In this paper, we bring together two recent threads in the theory of cohomological stability for configuration spaces: the theory of representation stability of Church \cite{Church-HomStab} and Church-Farb \cite{ChurchFarb-RepStab} and the motivic approach of Vakil-Wood \cite{VakilWood-Discriminants}. We show that the families of local systems on unordered configuration spaces of algebraic varieties studied in representation stability have natural motivic avatars in the Grothendieck ring of varieties, and that these stabilize under suitable motivic measures (cf. Theorem  \ref{thm:GeneralMotivicStab} below). A new and important feature of our approach is that we provide explicit formulas for the stable values using the language of probability theory adapted to a motivic setting (the \emph{motivic random variables} of the title). In particular, we find explicit universal formulas for the Hodge Euler characteristics of the stable cohomology of these local systems (cf. Theorem \ref{thm:VirtualLocHS} below). This includes, e.g., formulas for the Hodge Euler characteristics of the stable cohomology of generalized configuration spaces (cf. Corollary \ref{cor:RelativeStab} below). These results suggest that, at least in some cases, there should be a simple description of the stable cohomology of certain generalized
configuration spaces from which the stable cohomology of all local systems appearing in the theory representation stability for varieties can be deduced (cf. Remark \ref{remark:candidate-cohomology} below).  

	Over finite fields, the point-counting analogs of our results are due to Chen \cite{Chen-ConfPointCounting}. For motivic measures factoring through Chow motives, including the Hodge realization, our results can also be deduced from prior work of Getzler \cite{Getzler-MixedHodge}. We discuss both of these connections further in Remark \ref{rmk:ChenGetzler} and Appendix \ref{appendix:GetzlerChen}. 

 In the remainder of the introduction we describe our results in more detail in the case of the Hodge realization. 
 
\subsection{Motivic stabilization of character polynomials}\label{subsec:MotStabCharPoly}
Let $Y/\CC$ be a smooth connected quasi-projective variety. We denote by $\Conf^n (Y)$ the configuration space of unordered $n$-tuples of distinct points on $Y$. We denote by $\PConf^n (Y)$ the configuration space of ordered $n$-tuples of distinct points on $Y$, so~that 
\begin{equation}\label{eqn:SnCover} \PConf^n (Y) \rightarrow \Conf^n (Y) \end{equation}
is an $S_n$-cover (here $S_n$ denotes the symmetric group of permutations of $\{1,...,n\}$). 

The compactly supported cohomology of any local system $\Vc$ on $\Conf^n (Y)$ trivialized on this cover is naturally equipped with a polarizable mixed Hodge structure (cf. Subsection \ref{subsection:GVHS}). In particular, denoting by $W$ the weight filtration, 
\[ \Gr_W H^i_c(\Conf^n (Y), \Vc) \]
is a direct sum of polarizable Hodge structures. We denote 
\[ \QQ(-1) = H^2_c(\AA^1), \]
the Tate Hodge structure of weight 2, and $\QQ(n)=\QQ(-1)^{\otimes -n}$. 
  
We define $K_0(\HS)$ to be the Grothendieck ring of polarizable Hodge structures, which is the quotient of the free $\ZZ-$module with basis given by isomorphism classes $[V]$ of polarizable $\QQ-$Hodge structures $V$ by the relation $[V_1 \dsum V_2]=[V_1]+[V_2]$. It is a ring with $[V_1]\cdot[V_2]=[V_1 \otimes V_2]$.

For a local system $\Vc$ on $\Conf^n (Y)$ that becomes trivial on $\PConf^n (Y)$, we define 
\begin{equation}\label{eqn:LocSysChiHS} \chi_\HS(\Vc) = \sum -1^i \left[\Gr_W H^i_c(\Conf^n (Y), \Vc)\right] \in K_0(\HS). \end{equation}

The ring of character polynomials \cite{CEF-FIMod} is the countable polynomial ring $\QQ[X_1, X_2,...]$. For any $n$ we map the ring of character polynomials to the ring of rational class functions on $S_n$, $\Rep S_n \otimes \QQ$, by mapping $X_i$ to the function which counts cycles of length $i$. In particular, any character polynomial $p$ defines a sequence of ($\QQ$-)virtual representations $\pi_{p,n}$ of $S_n$ for increasing $n$. 

Thus, given a character polynomial $p$, we can use the cover (\ref{eqn:SnCover}) to define a sequence of ($\QQ$-)virtual $\QQ$-local systems $\Vc_{\pi_{p,n}}$ on $\Conf^n (Y$). Then, extending formula (\ref{eqn:LocSysChiHS}), the compactly supported cohomology of $\Vc_{\pi_{p,n}}$ gives a class $\chi_\HS(\Vc_{\pi_{p,n}})$ in the rationalized Grothendieck ring $K_0(\HS)_\QQ$. This ring has a natural completion $\widehat{K_0(\HS)_\QQ}$ with respect to the weight filtration, and one consequence of representation stability for configuration spaces as in \cite{Church-HomStab} is that for any fixed character polynomial~$p$, 
\begin{equation}\label{eqn:limitCharPolyHS} \lim_{n\rightarrow \infty} \frac{\chi_\HS(\Vc_{\pi_{p,n}})}{\QQ(-n \cdot \dim Y)} 
\end{equation}
exists in $\widehat{K_0(\HS)_\QQ}$. 

For a variety $X/\CC$, we denote 
\[ [X]_\HS = \sum_i (-1)^i \left[\Gr_W H^i_c(X(\CC),\QQ)\right] \in K_0(\HS). \]
Our main result, Theorem \ref{thm:VirtualLocHS} below, gives an explicit formula for the limit (\ref{eqn:limitCharPolyHS}) when we normalize by $[\Conf^n (Y)]_\HS$ instead of $\QQ(-n \cdot \dim Y)$. To state it efficiently, we introduce the map
\begin{align}
\nonumber \EE_\infty: \QQ[X_1,X_2,...] & \rightarrow \widehat{K_0(\HS)_\QQ} \\
\nonumber p & \mapsto  \EE_\infty[p]=\lim_{n\rightarrow \infty} \frac{\chi_\HS(\Vc_{\pi_{p,n}})}{[\Conf^n (Y)]_\HS}. 
\end{align}
$\EE_\infty$ should be thought of as a motivic version of the limit of the expectation functions of uniform probability measures on $\Conf^n (Y)$, in a way that will be made precise in Section \ref{sec:Probability}.

For a variety $Y/\CC$, the Hodge zeta function of $Y$ is
\[ Z_{Y,\HS}(t) = 1 + [Y]_\HS t + [\Sym^2 Y]_\HS t^2 +... \in 1 + K_0(\HS)[[t]]. \]
It admits a formal Euler product
\[ Z_{Y,\HS}(t) = \prod_{k \geq 1} \left(\frac{1}{1 + t^k}\right)^{M_k([Y]_\HS)} \]
where the $M_k([Y]_\HS)$ are elements of $K_0(\HS)_\QQ$  uniquely determined by this identity\footnote{We adopt this notation to match with the point-counting result \cite[Theorem 3 and Corollary~4]{Chen-ConfPointCounting}.}. The element $M_k([Y]_\HS)$ should be thought of as a motivic analog of the number of closed points of degree $k$ on a variety over a finite field (which appear as the corresponding exponents in Euler products for zeta functions of varieties over finite fields). 

\begin{maintheorem} \label{thm:VirtualLocHS} 
Let $Y/\CC$ be an $d$-dimensional smooth connected quasi-projective variety, and $\EE_\infty$, $X_k$, and $M_k([Y]_\HS)$ as defined above. For $t$ a formal variable,
\[ \EE_\infty \left[ (1+t)^{X_k} \right] = \left(1+ \frac{\QQ(dk)}{1+\QQ(dk)} t\right)^{M_k([Y]_\HS)} \]
and for $t_1, t_2, ...$ formal variables,
\[ \EE_\infty\left[ \prod_k (1+t_k)^{X_k} \right] = \prod_k \EE_\infty \left[ (1+t_k)^{X_k} \right] \]
where in both statements exponentiation of $(1+t)$ is understood in terms of the formal power series
\[ (1+t)^a = \exp (\log(1+t) \cdot a)) = \sum_{i=0}^\infty \binom{a}{i} t^i \]
and $\EE_\infty$ is applied to the coefficients of a power series. 
In particular, for any character polynomial $p$ we can obtain an explicit formula for $\EE_\infty [p]$ by expressing $p$ as a sum of monomials in the $X_k$ and using the formulas above. 
\end{maintheorem}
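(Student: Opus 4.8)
The plan is to reduce the whole statement to one family of identities for the ``factorial moments''
\[ \EE_\infty\!\left[\prod_k \binom{X_k}{i_k}\right] = \prod_k \binom{M_k([Y]_\HS)}{i_k}\left(\frac{\QQ(dk)}{1+\QQ(dk)}\right)^{i_k}, \]
one for each finitely supported tuple $(i_k)_{k\ge1}$ of nonnegative integers. Indeed $(1+t)^{X_k}=\sum_{i\ge0}\binom{X_k}{i}t^i$ and $\EE_\infty$ is $\QQ$-linear and applied coefficientwise, so the first displayed formula of the theorem is precisely these identities repackaged as a generating series in a single index $k$; and since the right-hand side above is a product over $k$, the multiplicativity (``asymptotic independence'') statement is equivalent to the mixed identities holding for all $(i_k)$. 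Both follow from the theorem by extracting coefficients, so it is enough to prove the displayed family.

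To prove it I would handle all $n$ at once. Via the $S_n$-cover $\PConf^n Y\to\Conf^n Y$ one has $\chi_\HS(\Vc_{\pi_{p,n}})=\langle \pi_{p,n},\chi_\HS^{S_n}(\PConf^n Y)\rangle_{S_n}$ for the $S_n$-equivariant Hodge Euler characteristic; the structural input --- a motivic analogue of Chen's point-counting formula, to be taken from earlier in the paper or proved via a motivic Euler product in the style of Vakil--Wood --- is that for $p=\prod_k\binom{X_k}{i_k}$ the series $\sum_n\chi_\HS(\Vc_{\pi_{p,n}})\,t^n$ is a ``decorated'' motivic Euler product over the (virtual) closed points of $Y$: a point of degree $k$ contributes the local factor $1+u_kt^k$, and applying $\frac{1}{i_k!}\partial_{u_k}^{i_k}$ and setting all $u_k=1$ gives
\[ \sum_n\chi_\HS(\Vc_{\pi_{p,n}})\,t^n \;=\; \Big(\prod_k\binom{M_k([Y]_\HS)}{i_k}\Big)\,t^{m}\,\frac{\sum_n[\Conf^n Y]_\HS\,t^n}{\prod_k(1+t^k)^{i_k}}, \]
where $m=\sum_k k\,i_k$; since for $u_k\equiv1$ the Euler product collapses to $\sum_n[\Conf^n Y]_\HS\,t^n=\prod_{k\ge1}(1+t^k)^{M_k([Y]_\HS)}$. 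Heuristically ``choosing $i_k$ degree-$k$ points'' produces $\binom{M_k([Y]_\HS)}{i_k}$, and ``deleting them'' divides the configuration series by $\prod_k(1+t^k)^{i_k}$.

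Then I would divide by $[\Conf^n Y]_\HS$ and pass to the limit $n\to\infty$ in $\widehat{K_0(\HS)_\QQ}$. The point is a motivic ``dominant pole'' statement for $G(t):=\sum_n[\Conf^n Y]_\HS\,t^n=\prod_k(1+t^k)^{M_k([Y]_\HS)}$: its coefficients behave like $\QQ(-nd)$ times a convergent correction, so for fixed $m$ the ratio of the $(n-m)$-th coefficient of $G(t)/\prod_k(1+t^k)^{i_k}$ to the $n$-th coefficient of $G(t)$ converges to $\QQ(dm)\cdot\prod_k(1+\QQ(dk))^{-i_k}=\prod_k\big(\QQ(dk)/(1+\QQ(dk))\big)^{i_k}$ --- the $\QQ(dm)$ from the degree shift $n\mapsto n-m$, the remaining factor from evaluating the extra denominator at the motivic pole ``$t=\QQ(d)$''. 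Multiplying by $\prod_k\binom{M_k([Y]_\HS)}{i_k}$ gives exactly the asserted value. Convergence must be verified in the weight completion: after normalizing by $[\Conf^n Y]_\HS$ (of weight about $2nd$), the tails should be carried by Hodge structures of weight tending to $-\infty$, which one extracts from weight estimates for $H^\bullet_c$ of configuration spaces.

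I expect the main obstacle to be making the two previous paragraphs rigorous in $K_0(\HS)_\QQ$, which is neither a domain nor ordered. The ``decorated motivic Euler product'' for $\chi_\HS(\Vc_{\pi_{p,n}})$ needs justification --- the heart being the motivic replacement for Chen's identity together with the multiplicativity of disjointness conditions in a Vakil--Wood-type Euler product carrying local-system coefficients --- and the assertion that the coefficients of $G(t)$ behave like $\QQ(-nd)$ must be given a precise meaning supporting the term-by-term limit; this is presumably where the motivic-probability and weight-completion formalism of Section~\ref{sec:Probability} is used, via a lemma on limits of ratios of series of the shape $\prod_k(1+t^k)^{a_k}$. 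A secondary bookkeeping point is identifying $M_k([Y]_\HS)$, a universal combination of power (Adams) operations applied to $[Y]_\HS$, as both the Euler-product exponent and the quantity enumerated by ``$\binom{M_k([Y]_\HS)}{i_k}$''.
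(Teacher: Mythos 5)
Your reduction of the theorem to the factorial-moment identities
\[ \EE_\infty\Bigl[\prod_k \tbinom{X_k}{i_k}\Bigr]=\prod_k \tbinom{M_k([Y]_\HS)}{i_k}\Bigl(\tfrac{\QQ(dk)}{1+\QQ(dk)}\Bigr)^{i_k} \]
is correct, and your final limiting step (divide by $[\Conf^n Y]_\HS$, use that the coefficients of $\prod_k(1+t^k)^{M_k([Y]_\HS)}$ are dominated by $\QQ(-nd)$, evaluate the extra factor $\prod_k(1+t^k)^{-i_k}$ at ``$t=\QQ(d)$'') is sound: it is exactly \cite[Lemma 5.4]{VakilWood-Discriminants}, available here because every $Y$ satisfies $\MSSP_\HS^*$, and it is precisely how the paper deduces Theorem \ref{thm:VirtualLocHS} from its Chen-type identity. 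The problem is the middle step, which you yourself flag: the ``decorated motivic Euler product over the (virtual) closed points of $Y$'' giving
\[ \sum_n\chi_\HS(\Vc_{\pi_{p,n}})t^n=\Bigl(\prod_k\tbinom{M_k([Y]_\HS)}{i_k}\Bigr)\,\frac{t^{m}\sum_n[\Conf^n Y]_\HS t^n}{\prod_k(1+t^k)^{i_k}} \]
is the entire content of the theorem, and your justification for it is only the finite-field heuristic (``choose $i_k$ degree-$k$ points, delete them''). In the Hodge/motivic setting there is no set of closed points to sum over: $M_k([Y]_\HS)$ is \emph{defined} as a formal Euler-product exponent, the local factors $1+u_kt^k$ have no geometric meaning, and local-system coefficients cannot simply be threaded through a Vakil--Wood-style product. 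So Chen's argument does not transport, and nothing ``earlier in the paper'' supplies this identity; it is exactly Theorem \ref{thm:CharPolyGenFunc}, which the paper proves only in Appendix \ref{appendix:GetzlerChen} by importing Getzler's computation \cite{Getzler-MixedHodge} of the $S_n$-equivariant Hodge Euler characteristic of $\PConf^n Y$.

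It is worth seeing how the paper's main proof sidesteps the missing step, because the detour is the real idea. Instead of working with the virtual local systems $\binom{X}{\overline{l}}$ directly, one first shows Theorem \ref{thm:GeneralMotivicStab} is \emph{equivalent} to the generalized-configuration-space statement (Corollary \ref{cor:GeneralRelativeStab}), using that the classes $c_\tau$ form a basis of $\Lambda_\QQ$ and Lemma \ref{lem:conf-Euler} (itself proved by point counting, via Lemma \ref{lem:ZeroPointCount}). For generalized configuration spaces the needed product expansions are honest geometric facts: the power structure of \cite{GLM-PowerStructure} applied to $[Y]$ (and, for the equivalence, to the universal family $[Z_n/\Conf^n Y]$) gives the generating function of the $[\Conf^{\tau\cdot *^n}Y]$, and Lemma \ref{lem:EulerCoeffs} converts $f^{_\Pow[Y]}$ into a naive Euler product with exponents $(p_k',[Y])=M_k([Y])$; only then does one extract the $Z_Y(s)/Z_Y(s^2)$ factor and apply \cite[Lemma 5.4]{VakilWood-Discriminants}. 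If you want to complete your route as written, you must either prove the displayed generating-function identity (e.g.\ by reproducing the appendix's argument from \cite{Getzler-MixedHodge}, which also only covers measures factoring through Hodge structures/Chow motives), or replace it by the reduction through generalized configuration spaces.
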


\begin{remark}\label{rmk:ChenGetzler} In fact, in Subsection \ref{subsec:MotAnalogCharPoly} we will construct avatars for the virtual local systems $\Vc_{\pi_{p,n}}$ living in a relative Grothendieck ring of varieties over $\Conf^n (Y)$, and then prove analogs of Theorem~\ref{thm:VirtualLocHS} and Corollary~\ref{cor:RelativeStab} below with $\HS$ replaced by an arbitrary motivic measure $\phi$ (under a technical hypothesis on $\phi$ and $Y$). These analogs appear as Theorem \ref{thm:GeneralMotivicStab} and Corollary \ref{cor:GeneralRelativeStab}. The reason it is possible to give motivic avatars for these virtual local systems is that they can be isolated by taking virtual sums of generalized configuration spaces; this corresponds roughly to the fact that permutation representations span the representation rings of symmetric groups. 

Theorem \ref{thm:VirtualLocHS} can also be deduced from results of Getzler~\cite{Getzler-MixedHodge}, as we show in Appendix~\ref{appendix:GetzlerChen}. However, there are cases of the more general Theorem \ref{thm:GeneralMotivicStab} which cannot be deduced from \cite{Getzler-MixedHodge} -- for example, when $\phi$ is the motivic measure with values in the Grothendieck ring of varieties completed for weight filtration, and $Y$ is a stably rational variety (cf. Example \ref{example:stably-rational}). Furthermore, the methods of our proof are of independent interest: the use of the power structure on the Grothendieck ring of varieties as a way of organizing arguments with configuration spaces modeled after point-counting arguments over finite fields is crucial in the sequel \cite{Howe-MRV2} where no analog of Getzler's result is available. The point-counting analog of Theorem \ref{thm:VirtualLocHS} is due to Chen \cite[Corollary 4]{Chen-ConfPointCounting}, and we also discuss this connection in Appendix \ref{appendix:GetzlerChen}. As is typical for this type of result (cf. \cite{VakilWood-Discriminants}), while the statements in the motivic and point counting settings are quite similar, neither implies the other, and the proofs in the motivic setting are more involved.   
\end{remark}

\begin{remark}
The probabilistic interpretation of this theorem is that the $X_k$ define asymptotically independent  \emph{motivic random variables} with asymptotic binomial distributions. The asymptotic binomial distributions are characterized by saying that $X_k$ converges in distribution to the ``sum of'' $M_k([Y]_\HS)$ Bernoulli random variables that are 1 with probability $\frac{\QQ(dk)}{1+\QQ(dk)}$ -- note that we interpret this purely as a statement about the moment generating functions (cf. Section \ref{sec:Probability}). The asymptotic independence and asymptotic distributions are natural in the analogous point-counting result over finite fields \cite[Corollary 4]{Chen-ConfPointCounting}: in that setting, $X_k$ corresponds to the random variable counting points of degree $k$ in a configuration, so that the $X_k$ can be described by summing up indicator random variables over closed points on the ambient variety $Y$. The asymptotic independence and asymptotic distributions of these indicator random variables have natural intuitive explanations. 
\end{remark}

\begin{example}\label{Example:ConfigSpaces}
For each $n\geq 1$, $X_1$ is the character of the permutation representation on the set $\{1,...,n\}$. Thus, denoting by $\Conf^{a \cdot b^{n-1}} (Y)$ the generalized configuration space of $n$ distinct points on $Y$, one labeled by $a$ and the remaining $n-1$ labeled by~$b$, we have
\[ \chi_\HS(\Vc_{\pi_{X_1,n}}) = [\Conf^{a \cdot b^{n-1}}(Y)]_\HS. \]
Theorem \ref{thm:VirtualLocHS} then gives
\[ \lim_{n\rightarrow \infty} \frac{[\Conf^{a \cdot b^{n-1}} (Y)]_\HS}{[\Conf^{n} (Y)]_\HS} = \left( \frac{\QQ(d)}{1+\QQ(d)}\right)[Y]_\HS \]
(here we use that $M_1([Y]_\HS)=[Y]_\HS$). 
\end{example}

\subsection{Motivic stabilization of generalized configuration spaces}
Example \ref{Example:ConfigSpaces} can be extended to all generalized configuration spaces. If ${\tau=a_1^{l_1}\cdot a_2^{l_2} \cdot ... \cdot a_m^{l_m}}$ is a generalized partition, we denote by $\Conf^{\tau} (Y)$ the configuration space of $|\tau|=\sum l_i$ distinct points on $Y$, with $l_i$ of the points labeled by $a_i$ for each $i$. We obtain

\begin{maincorollary}\label{cor:RelativeStab}
Notation as in Theorem \ref{thm:VirtualLocHS}, if $\tau = a_1^{l_1}...a_m^{l_m}$ is a generalized partition then 
\begin{multline*}
\lim_{n \rightarrow \infty} \frac{  [\Conf^{\tau \cdot *^{n-|\tau|}} (Y)]_\HS }{ [\Conf^n (Y)]_\HS} = \\
\frac{\partial_{t}^{\tau}|_{t_\bullet = 0}}{\tau!} \prod_k \left( 1 + \frac{\QQ(dk)}{1+\QQ(dk)} ( t_1^k + t_2^k + ... + t_m^k) \right)^{ M_k([Y]_\HS) }. 
\end{multline*}
\end{maincorollary}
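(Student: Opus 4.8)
The plan is to recognize the class $[\Conf^{\tau \cdot *^{n - |\tau|}} Y]_\HS$ as $\chi_\HS(\Vc_{\pi_{p_\tau,n}})$ for an explicit, $n$-independent character polynomial $p_\tau$, and then to feed $p_\tau$ into Theorem \ref{thm:VirtualLocHS}. For the identification, put $H = S_{l_1} \times \cdots \times S_{l_m} \times S_{n - |\tau|} \leq S_n$. The $S_n$-cover $\PConf^n Y \to \Conf^n Y$ exhibits $\Conf^{\tau \cdot *^{n - |\tau|}} Y$ as the intermediate finite étale cover $\PConf^n Y / H \to \Conf^n Y$, whose pushforward of $\QQ$ is the local system attached to $\mathrm{Ind}_H^{S_n} \mathbf{1}$ (induction and coinduction agree for finite groups); since the Leray spectral sequence of a finite morphism degenerates, $\chi_\HS$ of this local system equals $[\PConf^n Y / H]_\HS = [\Conf^{\tau \cdot *^{n - |\tau|}} Y]_\HS$. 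It therefore remains to exhibit the permutation character of $S_n$ acting on $S_n / H$ as a fixed character polynomial evaluated on cycle types; this is, up to unwinding definitions, precisely the construction of motivic avatars for the $\Vc_{\pi_{p,n}}$ out of generalized configuration spaces (Subsection \ref{subsec:MotAnalogCharPoly}), of which Example \ref{Example:ConfigSpaces} is the case $\tau = a^1$.

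To make the character polynomial explicit: a permutation $\sigma \in S_n$ fixes a coset in $S_n / H$ — equivalently, a colouring of $\{1, \dots, n\}$ by $\{a_1, \dots, a_m, *\}$ using colour $a_i$ exactly $l_i$ times — precisely when that colouring is constant along each cycle of $\sigma$. Choosing a colour for each cycle and grouping cycles by length, the number of such colourings is the value at $\sigma$ of
\[
p_\tau := \bigl[\, u_1^{l_1} \cdots u_m^{l_m} \,\bigr] \, \prod_{k \geq 1} \bigl( 1 + u_1^k + \cdots + u_m^k \bigr)^{X_k},
\]
where $[\,u_1^{l_1}\cdots u_m^{l_m}\,]$ denotes extraction of that monomial's coefficient; this lies in $\QQ[X_1, \dots, X_{|\tau|}]$, the a priori infinite product being harmless since only the factors with $k \leq |\tau|$ contribute to the coefficient of $u_1^{l_1}\cdots u_m^{l_m}$, and each only through a bounded power of $X_k$. (For $\tau = a^1$ this returns $p_{a^1} = X_1$, matching Example \ref{Example:ConfigSpaces}.) Consequently $\lim_{n} [\Conf^{\tau \cdot *^{n - |\tau|}} Y]_\HS / [\Conf^n Y]_\HS = \EE_\infty[p_\tau]$.

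Now substitute, in both displayed identities of Theorem \ref{thm:VirtualLocHS}, the $k$-th formal variable $t_k$ by $u_1^k + \cdots + u_m^k$; since this is an element of order $k$ in $\QQ[u_1,\dots,u_m]$, the substitution is a well-defined continuous ring map on the relevant power series rings and commutes with applying $\EE_\infty$ coefficientwise. It produces
\[
\EE_\infty\!\left[\, \prod_{k \geq 1} \bigl( 1 + u_1^k + \cdots + u_m^k \bigr)^{X_k} \,\right] = \prod_{k \geq 1} \left( 1 + \frac{\QQ(dk)}{1 + \QQ(dk)} \bigl( u_1^k + \cdots + u_m^k \bigr) \right)^{M_k([Y]_\HS)}.
\]
Extracting the coefficient of $u_1^{l_1}\cdots u_m^{l_m}$ on each side — an operation which, after renaming the $u_j$ to the corollary's variables $t_j$, is exactly $\tfrac{1}{\tau!}\,\partial_t^\tau|_{t_\bullet = 0}$ — gives $\EE_\infty[p_\tau]$ on the left and the asserted formula on the right. \textbf{The main difficulty} is not this last formal step but the first one: pinning down $p_\tau$ precisely and checking, with all weight and Tate-twist normalizations in place, the identity $[\Conf^{\tau \cdot *^{n-|\tau|}} Y]_\HS = \chi_\HS(\Vc_{\pi_{p_\tau, n}})$ that translates a statement about generalized configuration spaces into one about character polynomials. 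Once that dictionary — furnished by the avatar construction preceding Theorem \ref{thm:GeneralMotivicStab} — is in hand, Corollary \ref{cor:RelativeStab} is a direct specialization of Theorem \ref{thm:VirtualLocHS}.
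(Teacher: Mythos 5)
Your argument is internally correct, and the combinatorial identification is done carefully: $\Conf^{\tau\cdot *^{n-|\tau|}}Y=\PConf^n Y/H$ with $H=S_{l_1}\times\cdots\times S_{l_m}\times S_{n-|\tau|}$, the pushforward local system is the one attached to $\mathrm{Ind}_H^{S_n}\mathbf{1}$, its character is your $p_\tau=[u_1^{l_1}\cdots u_m^{l_m}]\prod_k(1+u_1^k+\cdots+u_m^k)^{X_k}$ (which indeed returns $X_1$ for $\tau=a^1$), the identification $\chi_\HS(\Vc_{\pi_{p_\tau,n}})=[\Conf^{\tau\cdot *^{n-|\tau|}}Y]_\HS$ is exactly Lemma \ref{lem:Leray}, and the substitution $t_k\mapsto t_1^k+\cdots+t_m^k$ commutes with coefficientwise application of $\EE_\infty$ for the reason recorded in Remark \ref{rem:multinomialMoments} (each $u$-coefficient is a finite integer combination of $t$-coefficients). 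However, your route is the reverse of the paper's. The paper does \emph{not} obtain Corollary \ref{cor:RelativeStab} by specializing Theorem \ref{thm:VirtualLocHS}; it proves (the general version of) the Corollary directly, by expanding $(1+t_1s+\cdots+t_ms+s)^{_\Pow [Y]}$ via the geometric description of the power structure, converting it to a naive Euler product with exponents $(p_k',[Y])$ by Lemma \ref{lem:EulerCoeffs}, factoring out $\prod_k(1+s^k)^{(p_k',[Y])}=Z_Y(s)/Z_Y(s^2)$, and invoking \cite[Lemma 5.4]{VakilWood-Discriminants} to pass to the limit; Theorem \ref{thm:VirtualLocHS} is then deduced from the Corollary via the equivalence proved in Section \ref{sec:Proof}. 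What you have written is essentially the Theorem-implies-Corollary half of that equivalence, executed with permutation representations and an explicit character polynomial rather than with the relative power structure on $K_0(\Var/\Conf^n Y)$ applied to $[Z_n/\Conf^n Y]$; that is a perfectly clean reduction, and your explicit $p_\tau$ is a nice concrete substitute for the paper's avatar $(p,[Z_n/\Conf^n Y])$.

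The caveat to flag is logical rather than technical: inside this paper your derivation would be circular, because Theorem \ref{thm:VirtualLocHS} (in particular the very existence of the limits defining $\EE_\infty$) is itself proved \emph{from} the direct proof of the Corollary; no independent proof of Theorem \ref{thm:VirtualLocHS} is supplied elsewhere except via Getzler's results in Appendix \ref{appendix:GetzlerChen}. So your argument is a valid specialization if Theorem \ref{thm:VirtualLocHS} is granted as a black box (or supplied by the appendix route), but it cannot stand in for the paper's own proof, which carries the real analytic content (the $\MSSP^*$ convergence input via Vakil--Wood) that your reduction inherits rather than establishes.
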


\begin{remark}
For an orientable manifold $M$, cohomological stabilization for the sequence of spaces $\Conf^\tau \cdot *^{n-|\tau|} (M)$  was observed as a consequence of representation stability already by Church \cite[Theorem 5]{Church-HomStab}. The existence of the limit appearing in Corollary \ref{cor:RelativeStab} for an algebraic variety $Y$ was also shown by Vakil-Wood \cite{VakilWood-Discriminants}. Thus, the main contribution of Corollary \ref{cor:RelativeStab} is the explicit formula in terms of the $M_k([Y]_\HS)$. However, our proofs of Corollary \ref{cor:RelativeStab} and Theorem \ref{thm:VirtualLocHS} do not use the results of Church or Vakil-Wood as input, and thus also provide a new proof of the existence of this limit (which is closely related to, but distinct from, the proof given by Vakil-Wood).  
\end{remark}

\begin{remark}\label{remark:candidate-cohomology} For $\tau$ a generalized partition as in Corollary \ref{cor:RelativeStab}, the variety $\Conf^{\tau \cdot *^{n-|\tau|}} (Y)$ is a quotient of $\Conf^{\tau' \cdot *^{n-|\tau|}} (Y)$ where $\tau'$ is a partition of the same length as $\tau$ but with all labels distinct. Thus, the stable cohomology of any sequence of generalized configuration spaces appearing in Corollary \ref{cor:RelativeStab}, and in fact of any of the sequences of local systems coming from the theory of representation stability, can be computed in terms of the stable cohomology of these special families. 
	
In particular, our results put a large number of constraints on the stable cohomology of
\[ \Conf^{a_1 \cdot \ldots \cdot a_m \cdot *^{n-m}}(Y) \textrm{ as } n \rightarrow \infty, \]
in that they give a formula for the $S_{m}$-equivariant Hodge Euler characteristic rather than just the Hodge Euler characteristic. Moreover, the simple probabilistic formulas of Theorem \ref{thm:VirtualLocHS} lead to simple candidate stable cohomology groups compatible with these constraints, and it may be possible to prove these candidates are related to, or in some cases even equal to, the actual stable cohomology groups by using other known descriptions of the cohomology of configuration spaces (e.g. the explicit presentation of the Leray spectral sequence in \cite{totaro:configuration}). This strikes us as a promising avenue for future work, but we have not pursued it further at this time. 
\end{remark}

\subsection{Multiplicities of irreducible representations}
An interesting open problem in the theory of representation stability of configuration spaces is to compute the families of irreducible representations appearing in the stable cohomology of $H^i(\PConf^n (Y), \QQ)$ for $i$ a fixed degree (cf., e.g., the discussion around \cite[Corollary 1.6]{CEF-FIMod}). As in \cite{CEF-RepStabFinField} and \cite{Chen-ConfPointCounting}, we find here that it is simpler to instead fix a family of irreducible representations, corresponding by the theory of \cite{ChurchFarb-RepStab} to a partition $\tau$, and then to obtain information about the cohomological degrees where that specific family of representations appears stably. 

For a partition $\tau$, denote by $\pi_{\tau,n}$ the representation of $S_n$ corresponding to $\tau$, and by $\Vc_{\tau,n}$ the local system on $\Conf^n (Y)$ attached to $\pi_{\tau,n}$. The multiplicity of $\pi_{\tau,n}$ in $H^i(\PConf^n (Y), \QQ)$ is equal to the dimension of $H^i (\Conf^n (Y), \Vc_{\tau,n})$, and thus we are interested in computing the stabilization of the cohomology groups $H^i (\Conf^n (Y), \Vc_{\tau,n}).$ For $Y$ smooth, by Poincar\'{e} duality the same information is contained in the compact supported cohomology groups, and it will be technically more convenient for us to work with these. 

We denote
\[ \chi_\HS(\Vc_{\tau,n})= \sum -1^i \left[\Gr_W H^i_c (\Conf^n (Y),  \Vc_{\tau,n})\right] \in K_0(\HS). \]  
Then, using Theorem \ref{thm:VirtualLocHS}, we can compute 
\[ \lim_{n \rightarrow \infty} \frac{\chi_\HS(\Vc_{\tau,n})}{[\Conf^n (Y)]_\HS} \]
which gives us (via weights) some partial information about the cohomological degrees where $\tau$ can appear. Indeed, this limit is computed in Theorem \ref{thm:VirtualLocHS} when we take $p$ to be the character polynomial $s_\tau$ giving the character of $\pi_{\tau,n}$ for sufficiently large $n$ (which exists by results of \cite{CEF-FIMod}).

\begin{example}
Let $\tau$ be the partition $(1)$, which corresponds to the standard representation of $S_n$ for each $n$. The standard representation has character $s_{(1)} = X_1 - 1$. We find
\[ \lim_{n \rightarrow \infty} \frac{\chi_\HS(\Vc_{(1),n})}{[\Conf^n (Y)]_\HS} = \left(\frac{\QQ(d)}{1+\QQ(d)}\right)[Y]_\HS - 1.\]
In particular, the largest weight appearing in this formula is the negative of the smallest non-zero weight $k$ appearing in the cohomology of $Y$. Thus, by standard properties of weights (cf. \cite{Deligne-HodgeII}), if the standard representation appears stably in any degree $i < k/2$, it must appear stably again in another degree $i' < k$ with the opposite parity.  
\end{example} 

\subsection{Sketch of proofs of Theorem \ref{thm:VirtualLocHS} and Corollary \ref{cor:RelativeStab}}
We consider $\Conf^n (Y)$ as a moduli space parameterizing subvarieties of $n$ distinct points of $Y$, with universal family 
\[ Z_n \rightarrow \Conf^n (Y),\, Z_n := \{ (y,c) \in Y \times \Conf^n (Y)\, | \, y \in c \}. \]
We use the \emph{pre-$\lambda$} structure on the Grothendieck ring of varieties over $\Conf^n (Y)$ (cf. Section \ref{sec:GrothendieckRing}) to construct from $Z_n$ natural motivic avatars of the local systems appearing in Theorem \ref{thm:VirtualLocHS}. These are the \emph{motivic random variables} of the title. We note that, through the optic of the pre-$\lambda$ ring structure, the ring of character polynomials becomes the ring $\Lambda$ of symmetric functions \cite{Macdonald-SymmetricFunctions} playing its natural role in that theory. 

One would like to directly prove the asymptotic independence and binomial distributions for the motivic random variables corresponding to the $X_i$ as asserted in Theorem~\ref{thm:VirtualLocHS}, however, these are difficult to work with directly because for $i\neq 1$ they have no clear geometric meaning. Instead, we show that Theorem \ref{thm:VirtualLocHS} is actually equivalent to Corollary \ref{cor:RelativeStab}, then  prove Corollary \ref{cor:RelativeStab}. 

To show Theorem \ref{thm:VirtualLocHS} is equivalent to Corollary \ref{cor:RelativeStab}, we show that the character polynomials corresponding to generalized configuration spaces are a basis for the ring of character polynomials, and that on these elements the two descriptions of $\EE_\infty$ (one from Theorem \ref{thm:VirtualLocHS} and the other from Corollary \ref{cor:RelativeStab}) coincide.  

To prove Corollary \ref{cor:RelativeStab}, we use the geometric interpretation of the power structure on the Grothendieck ring of varieties due to Gusein-Zade, Luengo, and Melle-Hernandez \cite{GLM-PowerStructure}. A key step is our Lemma \ref{lem:EulerCoeffs}, which we use to find the $M_k([Y])$ as exponents in product expansions for generating functions of generalized configuration spaces. 

Although we work outside the setting of classical probability theory, the use of probabilistic language plays an important role in organizing our arguments and stating our theorems. In Section~\ref{sec:Probability} we develop the basics of algebraic probability theory which we will need in this paper and it sequel \cite{Howe-MRV2}. 

\subsection{Relation with the sequel}

In the sequel \cite{Howe-MRV2} we prove stabilization results analogous to Theorem \ref{thm:VirtualLocHS} and Corollary~\ref{cor:RelativeStab} for spaces of smooth hypersurface sections of a fixed smooth projective variety, as well as point-counting analogs. These give new geometric examples of representation stability for symmetric, orthogonal, and symplectic groups, in the sense that we find stabilization (in the Grothendieck ring of Hodge structures) of the cohomology of local systems corresponding to natural families of representations of these groups composed with the monodromy representation on the cohomology of the universal smooth hypersurface section. Combined with the results of the present paper, we view the results of \cite{Howe-MRV2} as strong evidence that one should also seek richer representation stability-type phenomena in the setting of smooth hypersurface sections. 

\subsection{Outline}
In Section \ref{sec:PreLambda} we recall the notion of a pre-$\lambda$ structure and the associated power structure as in \cite{GLM-PowerStructure}. In Section \ref{sec:GrothendieckRing} we recall some Grothendieck rings of varieties and the ${\textrm{pre-}\lambda}$ and power structures on them defined by the Kapranov zeta function, along with their geometric interpretations. In Section \ref{sec:Probability} we develop the notion of an algebraic probability measure. Finally, in Section \ref{sec:Proof} we prove Theorem~\ref{thm:VirtualLocHS} and Corollary~\ref{cor:RelativeStab}. 

In Appendix \ref{appendix:GetzlerChen}, we elaborate on the connection between our work and that of Chen \cite{Chen-ConfPointCounting} and Getzler \cite{Getzler-MixedHodge} as indicated in Remark \ref{rmk:ChenGetzler}.

\subsection{Notation}
For partitions we follow the conventions of Vakil-Wood \cite{VakilWood-Discriminants}, though we tend to avoid the use of $\lambda$ to signify a partition to avoid conflicts with the theory of pre-$\lambda$ rings. 

A variety over a field $\KK$ is a reduced finite-type scheme over $\KK$ (in particular, we do not require our varieties to be irreducible). It is quasi-projective if it can be embedded as a locally closed subvariety of $\PP^n_\KK$.  

For $Y \rightarrow S$ a map of quasi-projective varieties over a field $\KK$ and $\tau$ a partition, we write $\Conf^\tau(Y/S)$ for the relative (i.e., fiberwise) configuration space of $\tau$-labeled distinct points on $Y/S$. Concretely, if the multiplicities of $\tau$ are $(l_1, \ldots, l_m)$, then
\[ \Conf^\tau(Y/S) = \left( \underbrace{Y \times_S Y \times_S \ldots \times_S Y}_{l_1 + \ldots + l_m} \backslash \Delta \right) / S_{l_1} \times S_{l_2} \times \ldots \times S_{l_m}, \]
where $\Delta$ is the big diagonal (the locus where not all coordinates are distinct), and the product of symmetric groups $S_{l_i}$ acts by permuting the coordinates in the natural way. We note that $\Conf^\tau(Y/S)$ is a quasi-projective variety over $\KK$ with a natural structure map to $S$, thus it makes sense to write $\Conf^\tau(Y/S)/S$. When there is no risk of confusion, we will also write $\Conf^\tau(Y)$ for $\Conf^\tau(Y/\KK)$. Following the conventions used in the introduction, we will sometimes write $\Conf^n$ for $\Conf^\tau$ where $\tau$ is any partition with a single label of multiplicity $n$, and $\PConf^n$ for $\Conf^\tau$ where $\tau$ is any partition with $n$ distinct labels each appearing with multiplicity one.

Our notation for pre-$\lambda$ rings and power structures is introduced in Section \ref{sec:PreLambda}. We highlight the following point here: if $f \in 1+(t_1,t_2,...)R[[t_1, t_2,..]]$, then 
$f^r$ will always denote the naive exponential power series
\[ \exp \left( r\cdot \log f \right) \in   1+(t_1,t_2,...)R_\QQ[[t_1, t_2,..]]. \]
If $R$ is a pre-$\lambda$ ring then we denote an exponential taken in the associated power structure by $f^{_\Pow r}.$

Our notation for Grothendieck rings of varieties and motivic measures is explained in Section \ref{sec:GrothendieckRing}.

\subsection{Acknowledgements}
We thank Weiyan Chen, Matt Emerton, Benson Farb, Jesse Wolfson, and  Melanie Matchett Wood for helpful conversations. We thank Weiyan Chen, Matt Emerton, Benson Farb, Melanie Matchett Wood, and two anonymous referees for helpful comments on earlier drafts. This material is based upon work supported by the National Science Foundation under Award
No. DMS-1704005.

\section{Power structures and pre-$\lambda$ rings}\label{sec:PreLambda}
In this section, we recall the notion of a pre-$\lambda$ structure and the associated power structure as in \cite{GLM-PowerStructure}. Our only new contribution is Lemma~\ref{lem:EulerCoeffs}. 

\subsection{Symmetric functions}\label{subsec:SymFunc}
We denote by 
\[ \Lambda = \varprojlim_n \ZZ[t_1,...,t_n]^{S_n} \]
the graded ring of symmetric functions \cite{Macdonald-SymmetricFunctions}. Here the limit is of graded rings along the maps induced from
\begin{align}
\nonumber \ZZ[t_1,...,t_n] & \rightarrow  \ZZ[t_1,...,t_{n-1}] \\
\nonumber t_i & \mapsto  \begin{cases} t_i & 1 \leq i \leq {n-1} \\ 0 & i = n. \end{cases}
\end{align}

We define the \emph{complete symmetric functions}
\[ h_k := \sum_{(l_1, l_2,...) \; | \; \sum l_i = k} t_1^{l_1}t_2^{l_2}...  \]
the \emph{elementary symmetric functions}
\[ e_k := \sum_{i_1 \neq i_2 \neq... \neq i_n} t_{i_1} t_{i_2} \cdot ... \cdot t_{i_n} \in \Lambda, \]
and the \emph{power sum symmetric functions}
\[ p_k := \sum_i t_i^k \in \Lambda. \]
We have 
\[ \Lambda = \ZZ[h_1, h_2,...] = \ZZ[e_1, e_2, ...], \]
and
\[ \Lambda_\QQ = \QQ[p_1,p_2,...]. \]

For a partition $\tau=a_1^{l_1} a_2^{l_2}...$ we also define 
\begin{align}
\nonumber h_\tau & =  h_{l_1} \cdot h_{l_2} \cdot ... \\
\nonumber p_\tau & =  p_{l_1} \cdot p_{l_2} \cdot ... 
\end{align}
(which depends only on the multiplicity partition $m(\tau)$). 

We also define the \emph{Mobius-inverted power sum symmetric functions}
\[ p_k' := \frac{1}{k} \sum_{d|k} \mu(k/d) p_k \in \Lambda_\QQ.\]
We have
\[ p_k = \sum_{d|k} d p_d'. \]

The complete symmetric functions are related to the power sum symmetric functions by the identity
\begin{equation}\label{eqn:complete-powersum} d\log \sum h_k t^k = \sum_{k=0}^\infty p_{k+1} t^{k},
\end{equation}
and to the Mobius-inverted power sums by the Euler-product identity
\begin{equation}\label{eqn:complete-eulerprod} \sum_{k} h_k t^k = \prod_k \left(\frac{1}{1-t^k}\right)^{p_k'} 
\end{equation}
(which follows from (\ref{eqn:complete-powersum}) after taking $\log$ of both sides).

\subsection{Pre-$\lambda$ rings}\label{subsec:pre-lambda}
\begin{definition}
A \emph{pre-$\lambda$ ring} is a ring $R$ equipped with a group homomorphism
\begin{align}
\nonumber \sigma_t: (R, +) & \rightarrow  (1 + t R[[t]], \cdot) \\
\nonumber r& \mapsto  1 + \sigma_1(r) t + \sigma_2(r) t^2 + ... 
\end{align}
such that $\sigma_1(r) = r$, \textbf{and such that 
\[ \mathbf{\sigma_t(1) = \frac{1}{1-t}}\]
(i.e., in terms of the coefficients $\sigma_k$, $\sigma_k(1)=1$ for all $k$). }
\end{definition}

The condition on $\sigma_t(1)$ is not standard, but is natural in our context. In particular, any $\lambda$-ring satisfies this condition. 

\begin{example}\label{example:Z}
$\ZZ$ is a pre-$\lambda$ ring with $\sigma_t(n)=\left(\frac{1}{1-t}\right)^n$. 
\end{example}

\begin{example}{\label{example:naivePreLambda}}
Any $\QQ$-algebra $R$ is a pre-$\lambda$ ring with 
\[ \sigma_t(r)= \exp \left( \log \left(\frac{1}{1-t}\right) \cdot r \right) \]
where $\exp$ and $\log$ are defined using the standard power series expansions. 
\end{example}

\begin{example}\label{example:RepG}
For $G$ an algebraic group, the representation ring $\Rep G$ is a pre-$\lambda$ ring with $\sigma$-operations given by $\sigma_k([V])=[\Sym^k V]$. Note that Example \ref{example:Z} is also of this form for $G=\{e\}$ the trivial group. 
\end{example}

\newcommand{\Set}{\mathrm{Set}}
\begin{example}\label{example:PermG}
For $G$ a group we define the Grothendieck ring $K_0(G-\Set)$ of finite $G$-sets: It is spanned by the isomorphism classes $[X]$ of finite $G$-sets $X$, modulo the relation $[X \sqcup X']=[X]+[X']$. It is a ring under cartesian product $[X]\cdot[X']=[X\times X']$. It is equipped with a pre-$\lambda$ structure with $\sigma$-operations 
\[ \sigma_k (X) = X^k / S_k. \]
For any $G$ there is a natural map from $K_0(G-\Set)$ to $K_0(\Rep G)$ sending a set to the free vector space on that set, and this is a map of pre-$\lambda$ rings (with the pre-$\lambda$ structure on $\Rep G$ as in Example \ref{example:RepG}). 
\end{example}

We define functorial \emph{$\sigma$-operations} on pre-$\lambda$ rings $R$ by 
\[ r \mapsto \sigma_k(r). \]
More generally, we define a functorial set-theoretic pairing 
\[ (\;,\;): \Lambda \times R \rightarrow R \]
on pre-$\lambda$ rings $R$ by requiring that for any $r \in R$, 
\[ (\; , r) : \Lambda \rightarrow R \]
is the unique ring homomorphism sending $h_k$ to $\sigma_k(r)$.
 
Using this pairing, any element $f\in \Lambda$ defines a functorial operation on pre-$\lambda$ rings by 
\[ r \mapsto (f, r). \] 

In particular, we also define the \emph{$\lambda$-operations} by
\[ \lambda_k(r) := (e_k, r) \]
where the $e_k$ are the elementary symmetric functions. Classically pre-$\lambda$ rings and $\lambda$-rings are axiomatized using the $\lambda$-operations, however, for us it is more natural to use $\sigma$-operations.

For a typical $f \in \Lambda$,
\[ (f, \; ) : R \rightarrow R \]
is not additive, however, the \emph{Adams operations} 
\[ r \mapsto (p_k, r) \]
corresponding to the power sum functions $p_k$ give functorial endomorphisms of $(R,+)$ (this follows from the identity (\ref{eqn:complete-powersum})). As a consequence, the maps 
\begin{align}
\nonumber R & \rightarrow  R_\QQ \\
\nonumber r & \mapsto  (p_k', r)
\end{align}
are also additive maps, functorial in pre-$\lambda$ rings. By equation (\ref{eqn:complete-eulerprod}), they satisfy the Euler product identity in $R_\QQ$, 
\begin{equation}\label{prelambda-eulerprod}
\sigma_t(r) = \prod_k \left(\frac{1}{1-t^k} \right)^{(p_k', r)}.
\end{equation}

\begin{remark}
We will not need the fact that $(p_k', \;)$ is additive in the present work, however, it will play an important role in the sequel \cite{Howe-MRV2}. 
\end{remark}

\subsection{Power structures.}
The notion of a power structure, due to Gusein-Zade, Luengo, and Melle-Hernandez \cite{GLM-PowerStructure}, gives us a way to interpret expressions of the form 
\[f(t_1,t_2,..)^r \]
for $f(t) \in 1 + (t_1,t_2,...)R[[t_1, t_2,...]]$ and $r \in R$ for any pre-$\lambda$ ring $R$, satisfying many of the expected properties of exponentiation and functorial in the pre-$\lambda$ ring $R$. To avoid confusion with the naive exponential
\begin{align} 
\nonumber R \times 1+(t_1,t_2,...)R[[t_1,t_2,...]] & \rightarrow  1+(t_1,t_2,...)R_\QQ[[t_1,t_2,...]]  \\
\nonumber (r,f) & \mapsto  f^r := \exp\left(\log(f) \cdot r \right)
\end{align}
(which we use extensively), we will denote the power structure operation
\[ R \times 1+(t_1,t_2,...)R[[t_1,t_2,...]] \rightarrow 1+(t_1,t_2,...)R[[t_1,t_2,...]] \]
by
\[ (r, f) \mapsto f^{_\Pow r}. \]

Before explaining how the power structure is obtained from the pre-$\lambda$ structure, we first give an illustrative example to show how it differs from the naive exponential:
\begin{example}
With notation as in Example \ref{example:PermG}, for $X$ a finite $G$-set, one can show that
\[ (1+t)^{[X]} = \sum_{k=0}^\infty \left( \frac{\left[X^k \backslash \Delta_k \right]}{k!}\right) t^k \in K_0(G-\mathrm{Set})_\QQ, \]
where $\Delta_k$ denotes the big diagonal (i.e. the locus where the coordinates of $X^k$ are not all distinct).
On the other hand, for the power structure attached to the pre-$\lambda$ structure described in Example \ref{example:PermG}, one finds
\[ (1+t)^{_\Pow [X]} = \sum_{k=0}^\infty \left[(X^k \backslash \Delta_k) / S_k \right] t^k \in K_0(G-\mathrm{Set}), \]
where $S_k$ is the symmetric group acting by permutation of the coordinates.

Thus, the division by $k!$ in the naive exponential is replaced by a free quotient by the symmetric group $S_k$ in the power structure. If $G$ is the trivial group, then we are working in $K_0(\mathrm{Fin-Set})=\ZZ$, where the class of any set is given by its cardinality. In that case, the two formulas agree; for general $G$, however, they are not the same! 
\end{example}

The power structure operation is constructed by first defining, for a monomial $t^I$, 
\[ \left(\frac{1}{1-t^I}\right)^{_\Pow r} := \sigma_{t^I}(r). \]
To define the operation for an arbitrary power series, we first note that because 
\[ \sigma_{t^I}(r) = 1 + r\cdot t^I + ..., \]
any element 
\[ f \in 1+(t_1,t_2,...)R[[t_1,t_2,...]] \]
has a unique expression as an Euler product (for the power structure)
\[ f = \prod_I \left(\frac{1}{1-t^I}\right)^{_\Pow a_I}. \]
Indeed, this can be shown by induction on the multi-indices $I$ (under any ordering where the total exponent $\sum I$ is non-decreasing), with the $I$th factor chosen to ensure that the product has the desired coefficient for $t^I$. 

Thus, using this Euler product for $f$, we may define its powers by 
\[ f^{_\Pow r} := \prod_I \left(\frac{1}{1-t^I}\right)^{_\Pow a_I \cdot r}. \]
It satisfies the following properties \cite{GLM-PowerStructure}:
\begin{itemize}
\item $f ^{_\Pow 0} = 1$
\item $f ^{_\Pow 1} = f$
\item $(f \cdot g)^{_\Pow r} = f^{_\Pow r} \cdot g^{_\Pow r}$
\item $f^{_\Pow r_1 + r_2} = f^{_\Pow r_1} f^{_\Pow r_2}$
\item $f^{_\Pow r_1 r_2} = (f^{_\Pow r_2})^{_\Pow r_1}$.
\end{itemize}

We have the following useful lemma: 

\begin{lemma}\label{lem:EulerCoeffs}
Let $R$ be a pre-$\lambda$ ring, with associated power structure as described above. If $f \in 1 + (t_1,t_2,...) \ZZ[[t_1,t_2,...]]$ and  $r \in R,$ then
\[ f^{_\Pow r} = \prod_{k\geq 1} f(t_1^k,t_2^k,...)^{ (p_{k}',r)}, \]
where the exponentiation on the right hand side of the equality is the naive exponentiation and the identity is of elements in
$1 + (t_1,t_2,...) R_\QQ[[t_1,t_2,...]].$
\end{lemma}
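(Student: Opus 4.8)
The plan is to reduce the identity to the single–variable Euler product identity (\ref{prelambda-eulerprod}) by expanding $f$ as an Euler product for the power structure and then performing a Fubini-type rearrangement. The key preliminary observation is that, because $f$ has coefficients in $\ZZ$, its Euler product expansion $f = \prod_I \left(\frac{1}{1-t^I}\right)^{_\Pow a_I}$ actually has all exponents $a_I \in \ZZ$. Indeed, on the subring $\ZZ \subseteq R$ the fact that $\sigma_t$ is a group homomorphism with $\sigma_t(1) = \frac{1}{1-t}$ forces $\sigma_t(a) = \left(\frac{1}{1-t}\right)^a$, the naive integer power, so every $\left(\frac{1}{1-t^I}\right)^{_\Pow a} = \sigma_{t^I}(a)$ with $a \in \ZZ$ has integer coefficients; solving for the $a_I$ recursively by total degree then keeps them in $\ZZ$, and moreover identifies $\left(\frac{1}{1-t^I}\right)^{_\Pow a_I}$ with the naive power $\left(\frac{1}{1-t^I}\right)^{a_I}$. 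By the definition of the power structure, this gives $f^{_\Pow r} = \prod_I \sigma_{t^I}(a_I r)$.

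The main computation then proceeds as follows. First I would apply (\ref{prelambda-eulerprod}) with the single variable $t$ replaced by the monomial $t^I$ and with $r$ replaced by $a_I r$; using that $(p_k',-)\colon R \to R_\QQ$ is additive, hence $\ZZ$-linear, this yields $\sigma_{t^I}(a_I r) = \prod_k \left(\frac{1}{1-t^{kI}}\right)^{a_I (p_k',r)}$, with naive exponentiation on the right. Substituting this into the product over $I$, interchanging the products over $I$ and over $k$, and regrouping by $k$ using the elementary rules $(g_1 g_2)^{\rho} = g_1^{\rho} g_2^{\rho}$ and $(g^a)^{\rho} = g^{a\rho}$ for naive exponentiation, I obtain $f^{_\Pow r} = \prod_k \left(\prod_I \left(\frac{1}{1-t^{kI}}\right)^{a_I}\right)^{(p_k',r)}$. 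Finally, substituting $t_j \mapsto t_j^k$ in the Euler product for $f$ identifies the inner product as $f(t_1^k, t_2^k, \ldots)$, which is exactly the claimed formula in $1 + (t_1,t_2,\ldots) R_\QQ[[t_1,t_2,\ldots]]$.

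I do not expect a genuine obstacle here. The one thing that needs constant care is the bookkeeping of \emph{which} exponentiation is in play at each stage — the power-structure operation $^{_\Pow}$ versus the naive $\exp$--$\log$ one — with the point being that once (\ref{prelambda-eulerprod}) has been invoked, every subsequent manipulation takes place entirely with naive exponentiation in the $R_\QQ$-coefficient power series ring. Alongside this one must check the (routine) legitimacy of the infinite products and of the interchange of $\prod_I$ and $\prod_k$, which is harmless because each monomial of fixed total degree is affected by only finitely many factors. The real crux, such as it is, is the reduction to integer exponents $a_I$ in the Euler product of $f$: it is precisely the integrality of the $a_I$ that lets them commute freely past $(p_k',-)$ and past naive exponentiation in the regrouping step.

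As a sanity check, one can also see the structural reason the statement is true: both sides are multiplicative in $f$ (for the left side by the property $(gh)^{_\Pow r} = g^{_\Pow r} h^{_\Pow r}$, for the right side because $f(t_1^k,\ldots) \mapsto f(t_1^k,\ldots)^{(p_k',r)}$ is multiplicative in the base and substitution $t_j \mapsto t_j^k$ is a ring map), so it suffices to treat $f = \frac{1}{1-t^J}$ for a single monomial $t^J$, where the identity is exactly (\ref{prelambda-eulerprod}) after the substitution $t \mapsto t^J$. This reorganization gives an alternative write-up of the same argument, and I would mention it as the conceptual skeleton even if the direct computation above is what gets recorded.
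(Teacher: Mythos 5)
Your argument is correct and is essentially the paper's own proof: extract the integer-exponent Euler product of $f$, apply the identity (\ref{prelambda-eulerprod}) to each factor, then interchange the products over $I$ and $k$ and regroup. The only cosmetic difference is that you pull the integer $a_I$ through via the $\ZZ$-linearity of $(p_k',-)$, whereas the paper instead writes $\sigma_{t^I}(a_I\cdot r)=\sigma_{t^I}(r)^{a_I}$ using that $\sigma_t$ is a group homomorphism and applies (\ref{prelambda-eulerprod}) to $\sigma_{t^I}(r)$ directly — the same fact in a slightly different place.
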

\begin{proof}
Because the coefficients of $f$ are in $\ZZ$, it has an Euler product
\[ f = \prod_I \left( \frac{1}{1-t^I} \right)^{a_I} = \prod_I \left( \frac{1}{1-t^I} \right)^{_\Pow a_I}\]
with $a_I \in \ZZ$, which is independent of the power structure because it only involves integral powers. 
Thus, for any $r$, 
\begin{multline*}
f^{_\Pow r} =  \prod_I \left(\frac{1}{1-t^I}\right)^{_\Pow a_I \cdot r} = \prod_I \sigma_{t^I}(a_I \cdot r) = \prod_I \sigma_{t^I}(r)^{a_I} \\
 =  \prod_I \prod_k \left(\frac{1}{1-(t^{I})^k}\right)^{ a_I \cdot (p_{k}', r) } =  \prod_k \prod_I \left(\frac{1}{1-(t^{I})^k}\right)^{a_I \cdot (p_{k}', r) } \\
 = \prod_k \left( \prod_I \left(\frac{1}{1-(t^{I})^k}\right)^{a_I} \right) ^{(p_{k}', r)} = \prod_k f(t_1^k, t_2^k, ...)^{ (p_{k}', r) }. 
\end{multline*}
Here the step from the first to second line follows from equation (\ref{prelambda-eulerprod}).
\end{proof}

\section{Some Grothendieck rings of varieties}\label{sec:GrothendieckRing}
In this section, we define some Grothendieck rings of varieties and discuss the pre-$\lambda$ and power structures on them defined by the Kapranov zeta function. We then recall the geometric description of the associated power structure in characteristic zero given in \cite{GLM-PowerStructure}, and explain how to generalize it to perfect fields following the generalization of the pre-$\lambda$ structure in \cite{Mustata-ZetaBook}. We finish by explaining how to use these structures to understand configuration spaces. The key result of this section is Lemma \ref{lem:conf-Euler}, which establishes the fundamental relationship in $\Lambda$ which we use to prove Corollary~\ref{cor:RelativeStab}.

\label{subsec:GrothendieckRing}
Let $\KK$ be a a field. The \emph{Grothendieck ring of varieties} over $\KK$, $K_0(\Var/\KK)$, 
is generated by the isomorphism classes $[Y]$ of varieties over $Y/\KK$, modulo the relations 
\[ [Y]=[Y\backslash Z] + [Z] \]
for $Z$ a closed subvariety of $Y$. It is a ring with 
\[ [Y_1]\cdot[Y_2]=[Y_1 \times Y_2].\]
We refer to \cite[Chapter 7]{Mustata-ZetaBook}, as our basic reference for $K_0(\Var/\KK)$. 

We denote $\LL=\AA^1$, and 
\[ \Mc_\LL = K_0(\Var / \KK)[\LL^{-1}]. \]
The ring $\Mc_\LL$ has a decreasing dimension filtration, where $\Fil^i$ is generated by classes $[Y]/\LL^{m}$ where $[Y]$ has dimension $\leq m-i$. We denote the completion with respection to the dimension filtration by $\widehat{\Mc_\LL}.$

A motivic measure is a map of rings
\[ \phi: K_0(\Var / \KK) \rightarrow A. \]

\begin{example}\label{example:mot-measure}\hfill 
\begin{itemize}
\item If $\KK=\FF_q$, there is a point-counting measure 
\begin{align}
\nonumber \phi_q : K_0(\Var/\FF_q) & \rightarrow  \ZZ \\
\nonumber \; [X] & \mapsto  \# X(\FF_q). 
\end{align}
It extends to a map
\[ \phi_q: \Mc_\LL \rightarrow \ZZ[q^{-1}], \]
but is not continuous with respect to the dimension filtration, so does not extend to $\widehat{\Mc_\LL}$. 
\item If $\KK=\CC$, there is a Hodge measure
\begin{align}
\nonumber K_0(\Var/\CC) & \rightarrow  K_0(\HS) \\
\nonumber [X] & \mapsto [X]_\HS := \sum (-1)^i \left[\Gr_W H_c^i(X(\CC),\QQ) \right]
\end{align}
where $X$ is a quasi-projective variety over $\CC$, $W$ is the weight filtration, and  $K_0(\HS)$ is the Grothendieck ring of polarizable Hodge structures over $\QQ$. It extends naturally to $\Mc_\LL$, and to a map
\[ \motcomp \rightarrow \widehat{K_0(\HS)} \]
where the completion on the right is with respect to the weight grading. 
\end{itemize}
\end{example}

\subsection{Pre-$\lambda$ and power structure}\label{subsec:GrothPow}\hfill\\
\textbf{\emph{For this subsection, we assume 
$\KK$ is a perfect field}}. \\

\noindent We consider the ring\footnote{We use this modification of the Grothendieck ring to avoid some technicalities involving the possible existence of different pre-$\lambda$ structures on $K_0(\Var/\KK)$ when $\mathrm{char} \KK \neq 0$, depending on whether symmetric powers are defined using the scheme theoretic or stacky quotient (cf. \cite{ekedahl:geometric-invariant} for details).} 
\[ \widetilde{K_0}(\Var / \KK), \]
as defined in \cite[7.2]{Mustata-ZetaBook}.  It is the quotient of $K_0(\Var/\KK)$ by the relations  $[Y_1]=[Y_2]$ whenever there is a radicial surjective morphism $Y_1 \rightarrow Y_2$.

By \cite[Proposition 7.25]{Mustata-ZetaBook}, for $\KK$ of characteristic zero 
\[ K_0(\Var / \KK) = \widetilde{K_0}(\Var / \KK), \]
and by \cite[Proposition 7.26]{Mustata-ZetaBook}, for $\KK$ a finite field the point counting measures factor through $\widetilde{K_0}(\Var / \KK)$. 

By \cite[Proposition 7.28]{Mustata-ZetaBook}, $\widetilde{K_0}(\Var / \KK)$ has a pre-$\lambda$ structure such that for any quasi-projective variety $Y/\KK$, $\sigma_t([Y])$ is equal to the Kapranov zeta function:
 \[ \sigma_t([Y]) = Z_Y(t) := \sum_k [\Sym^k Y ] t^k. \]

In \cite{GLM-PowerStructure}, it is shown that for $\KK=\CC$ the corresponding power structure admits the following description on effective power series: If  
\[ f(t_1,...) = \sum_I [A_I] t^I \]
for quasi-projective varieties $A_I / \KK$, and $Y$ is a quasi-projective variety, then the coefficient of $t^{I'}$ in 
\[ f(t_1,...)^{_\Pow [Y]} \]
is the class of the variety parameterizing labelings of finite sets of distincts points of $Y$ by labels in
$\sqcup_{I} A_I$ with total weight $I'$ (where a label in $A_I$ has weight $I$). Concretely, 
\begin{multline*} f(t_1,...)^{_\Pow [Y]} = \\
\sum_I  \left [ \bigsqcup_{\substack{\{I_1, ..., I_n, m_1, ... m_n\} \\  \textrm{ s.t. } m_1 \cdot I_1+...+ m_n \cdot I_n = I}}  \left(\left( \prod_{k=1}^n  (A_{I_k} \times Y)^{m_k}  \right) \backslash \Delta \right)/ S_{m_1}\times...\times S_{m_n} \right] t^I 
\end{multline*}
where $\Delta$ is the big diagonal and the symmetric groups act by simultaneous permutation of points (i.e. coordinates in $Y$) and labels (i.e. coordinates in $A_{I_k}$). 

In fact, with the modification of working in $\widetilde{K_0}(\Var / \KK)$, this description is valid over any perfect field -- the modifications of the proof necessary are essentially the same as those used in \cite[Proposition 7.28]{Mustata-ZetaBook} to show that the Kapranov zeta function gives a pre-$\lambda$ structure in this setting. 

\begin{remark}
For the reader uncomfortable with the extension of the geometric description of the power structure outside of characteristic zero, we note that we use it only in the proof of Lemma \ref{lem:conf-Euler}, where the necessary point counting result can be verified directly. 
\end{remark}

\begin{example}\label{example:ConfGen}
Using the geometric description of the power structure we obtain a generating function for generalized configuration spaces (in $\widetilde{K_0}(\Var / \KK)$)
\[ (1+ t_1 + t_2 + ...)^{[Y]} = \sum_{I=(i_1,i_2,..)} [\Conf^{a_1^{i_1} \cdot a_2^{i_2} ...} (Y)] t_1^{i_1}t_2^{i_2}.... \]
\end{example}

\begin{lemma}\label{lem:ZeroPointCount}
Let $f \in \Lambda_\QQ$. If for any $q$ and any quasi-projective variety $Y / \FF_q$, 
\[ \phi_q((f, [Y])) =0, \]
then $f = 0$ (here $\phi_q$ is the point-counting measure).   
\end{lemma}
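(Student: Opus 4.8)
The plan is to translate the hypothesis into the vanishing of a polynomial function on a large set of point-count vectors, and then conclude by a density/Vandermonde argument. Let $R=\widetilde{K_0}(\Var/\FF_q)$ be the pre-$\lambda$ ring of Section~\ref{sec:GrothendieckRing}, through which $\phi_q$ factors, so that for a quasi-projective variety $Y/\FF_q$ the composite $\psi_Y:=\phi_q\circ(\,\cdot\,,[Y])\colon \Lambda_\QQ\to\QQ$ is a ring homomorphism. The first step is to identify $\psi_Y$ explicitly: by definition $(h_k,[Y])=\sigma_k([Y])=[\Sym^k Y]$, so $\psi_Y(h_k)=\#\Sym^k Y(\FF_q)$, and since $\sum_k \#\Sym^k Y(\FF_q)\,t^k$ is the Hasse--Weil zeta function $Z_Y(t)$ of $Y$, applying $\psi_Y$ to the identity (\ref{eqn:complete-powersum}) and comparing with the classical formula $d\log Z_Y(t)=\sum_{k\ge 1}\#Y(\FF_{q^k})\,t^{k-1}$ gives $\psi_Y(p_k)=\#Y(\FF_{q^k})$ for all $k$. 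Hence, writing $f=P(p_1,\dots,p_N)$ for a polynomial $P\in\QQ[x_1,\dots,x_N]$, we get $\phi_q((f,[Y]))=P\big(\#Y(\FF_q),\dots,\#Y(\FF_{q^N})\big)$, and the hypothesis becomes: this polynomial in the point counts vanishes for every $q$ and every $Y$.

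Now suppose $f\neq 0$, so $P\neq 0$, and seek a contradiction. The key step is to feed in enough test varieties to pin down $P$. Fix one prime power $q\ge 2$, and for nonnegative integers $a_1,\dots,a_N$ take $Y=\bigsqcup_{m=1}^{N} a_m\cdot\AA^m_{\FF_q}$, a finite disjoint union of affine spaces (hence quasi-projective). Then $\#Y(\FF_{q^k})=\sum_{m=1}^N a_m q^{mk}$, so the hypothesis yields
\[ P\Big(\sum_{m=1}^N a_m q^{m},\ \sum_{m=1}^N a_m q^{2m},\ \dots,\ \sum_{m=1}^N a_m q^{Nm}\Big)=0 \qquad\text{for all } (a_1,\dots,a_N)\in\ZZ_{\ge 0}^N. \]
The left-hand side is a polynomial in $a_1,\dots,a_N$ vanishing on the Zariski-dense set $\ZZ_{\ge 0}^N$, hence it is identically zero; equivalently, $P$ vanishes on the $\QQ$-linear span of the vectors $v_m:=(q^{m},q^{2m},\dots,q^{Nm})$, $m=1,\dots,N$. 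The matrix with rows $v_m$ has $(m,k)$-entry $(q^m)^k$, so up to the nonzero factor $\prod_m q^m$ its determinant is the Vandermonde determinant of the distinct numbers $q,q^2,\dots,q^N$, hence nonzero; therefore $v_1,\dots,v_N$ span $\QQ^N$ and $P\equiv 0$, contradicting $P\neq 0$. Thus $f=0$.

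I do not expect a serious obstacle here — the argument is elementary once set up. The two points that need care are: (i) making the pairing and the measure compose as intended, i.e. working in $R=\widetilde{K_0}(\Var/\FF_q)$ through which $\phi_q$ factors, and correctly matching the Adams operation $(p_k,\,\cdot\,)$ to counting points over $\FF_{q^k}$ via (\ref{eqn:complete-powersum}); and (ii) justifying that the test varieties $\bigsqcup_{m=1}^N a_m\cdot\AA^m$ are admissible, i.e. that a finite disjoint union of quasi-projective varieties is again quasi-projective (a standard fact, e.g.\ via an explicit closed immersion $\PP^a\sqcup\PP^b\hookrightarrow\PP^{a+b+1}$). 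Everything else reduces to the Vandermonde/density step above; note in particular that the hypothesis is only needed for a single prime power $q\ge 2$ and the countable family $\{\bigsqcup a_m\AA^m\}$.
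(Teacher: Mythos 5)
Your argument is correct, but it takes a genuinely different route from the paper. The paper's proof is a one-line reduction: writing $f$ in the basis $h_\tau$, the pairing $(h_\tau,[Y])$ becomes a monomial in the symmetric powers $[\Sym^{l_i}Y]$, and the vanishing statement is then exactly \cite[Lemma 3.18]{VakilWood-Discriminants}, which is quoted as a black box. You instead work in the power-sum coordinates $\Lambda_\QQ=\QQ[p_1,p_2,\dots]$, identify $\phi_q((p_k,[Y]))=\#Y(\FF_{q^k})$ by pushing the Newton identity (\ref{eqn:complete-powersum}) through the ring homomorphism $\phi_q\circ(\,\cdot\,,[Y])$ and comparing with $d\log$ of the Hasse--Weil zeta function, and then kill the resulting polynomial $P$ with the explicit test family $\bigsqcup_m a_m\cdot\AA^m$, Zariski density of $\ZZ_{\ge 0}^N$, and a Vandermonde determinant. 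In effect you have reproved the relevant case of Vakil--Wood's lemma rather than citing it; this makes your proof self-contained and elementary, and it shows the hypothesis can be weakened to a single $q\ge 2$ and a countable explicit family of test varieties, whereas the paper's version buys brevity and stays in the $h$-basis that matches how $c_\tau$ and the symmetric-power formulas are used elsewhere. The points you flag yourself are indeed the only delicate ones, and both are fine: $\phi_q$ factors through $\widetilde{K_0}(\Var/\FF_q)$ so $\psi_Y$ really is a ring homomorphism with $\psi_Y(h_k)=\#\Sym^kY(\FF_q)$, and finite disjoint unions of quasi-projective varieties are quasi-projective, so your test varieties are admissible.
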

\begin{proof}
Since the basis $h_\tau$ maps under $(f, \;)$ to the monomials in the symmetric powers, this follows from \cite[Lemma 3.18]{VakilWood-Discriminants}. 
\end{proof}

For a partition $\tau$, we define $c_\tau \in \Lambda$ to be the unique element of $\Lambda$ such that for any quasi-projective variety $Y$ over any $\KK$, 
\[ (c_\tau, [Y]) = [\Conf^\tau (Y)]. \]
By \cite[3.19]{VakilWood-Discriminants} such a $c_\tau$ exists, is unique, and depends only on $m(\tau)$. Moreover the $c_\tau$ over all possible multiplicities form a basis for $\Lambda$ (this follows from the explicit formula in terms of $h_\tau$ given in \cite[3.19]{VakilWood-Discriminants}). For $I=(l_1,l_2,..)$ a sequence in $\ZZ_{\geq 0}$ that is eventually 0, we denote by $c_I$ the element $c_{\tau_I}$ for $\tau_I$ any partition with multiplicities given by $I$ up to reordering.  

\begin{lemma}\label{lem:conf-Euler}
\[ \prod_{k} (1 + t_1^k + t_2^k + ... )^{p_k'} = \sum_{I} c_I t^{I}. \]
\end{lemma}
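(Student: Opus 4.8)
The plan is to apply Lemma~\ref{lem:EulerCoeffs} to the effective power series $f(t_1,t_2,\dots) = 1 + t_1 + t_2 + \cdots \in 1 + (t_1,t_2,\dots)\ZZ[[t_1,t_2,\dots]]$ and the class $r = [Y] \in R$, then compare with the geometric description of the power structure. First I would recall from Example~\ref{example:ConfGen} that, by the geometric interpretation of the power structure on $\widetilde{K_0}(\Var/\KK)$,
\[ (1 + t_1 + t_2 + \cdots)^{_\Pow [Y]} = \sum_{I=(i_1,i_2,\dots)} [\Conf^{a_1^{i_1} a_2^{i_2}\cdots} Y]\, t_1^{i_1} t_2^{i_2}\cdots = \sum_I (c_I, [Y])\, t^I, \]
using the defining property $(c_I, [Y]) = [\Conf^{\tau_I} Y]$ of the elements $c_I \in \Lambda$ introduced just before the statement. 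On the other hand, Lemma~\ref{lem:EulerCoeffs} gives
\[ (1 + t_1 + t_2 + \cdots)^{_\Pow [Y]} = \prod_k \big(1 + t_1^k + t_2^k + \cdots\big)^{(p_k', [Y])}, \]
where the right-hand exponentiation is the naive one.

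The next step is to observe that both sides of the desired identity in $\Lambda$ — namely $\prod_k (1 + t_1^k + t_2^k + \cdots)^{p_k'}$ and $\sum_I c_I t^I$ — are power series whose coefficients are elements of $\Lambda$, and the functorial pairing $(\,\cdot\,, [Y])\colon \Lambda \to R$ carries them (coefficient-wise) to the two expressions displayed above. Indeed $(\,\cdot\,,[Y])$ is a ring homomorphism, so it commutes with the naive exponentiation $g \mapsto g^a$ applied to coefficients and sends $(p_k', g)$-type expressions correctly; concretely $(p_k', [Y])$ is the image of $p_k' \in \Lambda_\QQ$ under $(\,\cdot\,, [Y])$ by definition of the pairing, so applying $(\,\cdot\,, [Y])$ coefficient-wise to $\prod_k (1+t_1^k+\cdots)^{p_k'}$ yields $\prod_k (1+t_1^k+\cdots)^{(p_k',[Y])}$. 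Thus the two sides of the claimed identity agree after applying $(\,\cdot\,, [Y])$ for every quasi-projective variety $Y$ over every field $\KK$.

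It remains to upgrade this equality of images to an equality in $\Lambda$ (coefficient by coefficient), and this is where the one genuine input is needed: Lemma~\ref{lem:ZeroPointCount} says that an element $f \in \Lambda_\QQ$ which satisfies $\phi_q((f,[Y])) = 0$ for all finite fields $\FF_q$ and all quasi-projective $Y/\FF_q$ must be zero. Applying this to the difference of corresponding coefficients of the two power series — each such difference lies in $\Lambda_\QQ$ and maps to $0$ under $(\,\cdot\,,[Y])$, hence under $\phi_q\circ(\,\cdot\,,[Y])$, for all $q$ and $Y$ — forces every coefficient difference to vanish, giving the identity in $\Lambda_\QQ$, and since both sides in fact have coefficients in $\Lambda$ (the $c_I$ lie in $\Lambda$ and the left side, being a value of a power structure operation on an integral series, also does after the Lemma~\ref{lem:EulerCoeffs} rewriting is interpreted back in $\Lambda$), in $\Lambda$. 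The main obstacle, such as it is, is purely bookkeeping: making sure the coefficient-wise application of the ring homomorphism $(\,\cdot\,,[Y])$ to the formal identity $f^{_\Pow r} = \prod_k f(t_1^k,\dots)^{(p_k',r)}$ of Lemma~\ref{lem:EulerCoeffs} is legitimate and that the separation-of-variables rearrangement of the product over $k$ and over monomials converges in the relevant completed ring — but this is exactly the manipulation already carried out inside the proof of Lemma~\ref{lem:EulerCoeffs}, so no new difficulty arises.
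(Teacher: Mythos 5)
Your proposal is correct and follows essentially the same route as the paper: apply the pairing $(\,\cdot\,,[Y])$, identify the two sides via Lemma~\ref{lem:EulerCoeffs} and the geometric description of the power structure (Example~\ref{example:ConfGen}), and conclude coefficient-wise with the point-counting detection Lemma~\ref{lem:ZeroPointCount}. The extra bookkeeping you spell out (the pairing extending to $\Lambda_\QQ$ and commuting with the naive exponentiation) is exactly what the paper leaves implicit.
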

\begin{proof}
For any quasi-projective $Y / \FF_q$, we apply $(\;,[Y])$ to the left-hand side to obtain  
\begin{align*}
\prod_{k} (1 + t_1^k + t_2^k + ... )^{(p_k', [Y])} & =  (1+t_1 + t_2 + ...)^{_\Pow [Y]} \\
& =  \sum_{I} [\Conf^{\tau_I} (Y)] t^I\\
& = \sum_{I} (c_I, [Y]) t^I,
\end{align*}
where the first line follows from Lemma \ref{lem:EulerCoeffs} and the second from Example \ref{example:ConfGen}. The result then follows from Lemma \ref{lem:ZeroPointCount} after applying the point-counting realization. 
\end{proof}

\subsection{Relative Grothendieck rings}\label{subsec:RelGrothRings}
For $\KK$ a field and $S/\KK$ a variety, we will also consider the relative Grothendieck ring 
\[ K_0(\Var / S) \]
of quasi-projective varieties over $S$, generated by isomorphism classes $[Y/S]$ of quasi-projective varieties $Y/S$ (i.e. maps $Y \rightarrow S$ of quasi-projective varieties) modulo the relations 
\[ [Y/S]=[Z/S]+[Y\backslash Z / S] \]
for any closed subvariety $Z \subset Y$ in a quasi-projective $Y/S$. It is a ring with operation
\[ [Y_1/S]\cdot[Y_2/S]=[Y_1 \times_S Y_2 / S]. \]

$K_0(\Var / S)$ is naturally an algebra over $K_0(\Var / \KK)$, and there is a natural map of $K_0(\Var / \KK)$-modules given by ``forgetting the structure morphism'':
\begin{align*}
K_0(\Var / S ) & \rightarrow  K_0(\Var / \KK) \\
\; [Y/S] & \mapsto  [Y].
\end{align*}

If $\KK$ is of characteristic zero then there is a relative Kapranov zeta function whose coefficients are relative symmetric powers over $S$ and which induces a pre-$\lambda$ and power structure on $K_0(\Var / S)$ (cf. \cite[Remark at the end of Section 1]{GLM-PowerStructure}).  Note that the forgetful map does not respect the ring or pre-$\lambda$ structures.

The explicit formula for a configuration space in terms of symmetric powers of \cite[3.19]{VakilWood-Discriminants} still holds in the relative setting, and in particular we deduce that for any $\tau$,
\[ (c_\tau, [Y/S]) = [\Conf^\tau (Y / S) / S]. \]

\subsection{Geometric variations of Hodge structure}\label{subsection:GVHS}
For $X$ a smooth connected variety over $\CC$, we denote by denote $\GVSH/X$ (sic) the category of geometric variations of Hodge structure on $X(\CC)$ as in \cite{Arapura-LerayMotivic}. For $\Vc$ an element of $\GVSH/X$, the theory of \cite{Arapura-LerayMotivic} produces a mixed Hodge structure on $H^i_c(X(\CC), \Vc)$. This induces a map of rings
\begin{align*}
\chi_\HS : K_0( \GVSH / X) & \rightarrow  K_0(\HS) \\
\;[\Vc] & \mapsto  \sum_i (-1)^i \left[\Gr_W H^i_c( X(\CC), \Vc) \right]. 
\end{align*}

We now discuss a compatibility between this construction and the assignment ${[Z] \mapsto [Z]_\HS}$ for $Z/X$ finite \'{e}tale. 

For $X/\CC$ as above, the category $\FinEt / X$ is equivalent to the category of finite $\pi_1(X(\CC))$-sets. We define $K_0(\FinEt/X)$ to be the Grothendieck ring of this category as defined in Example \ref{example:PermG}. There is a natural map of pre-$\lambda$ rings 
\begin{align*}
K_0(\FinEt/X) & \rightarrow K_0(\Var / X) \\
\;[Z / X] & \mapsto [Z / X].
\end{align*}
There is also a natural map of pre-$\lambda$ rings
\begin{align*}
 K_0(\FinEt / X) & \rightarrow K_0(\GVSH/X)\\
\;[f: Z / X] & \mapsto [f_*\QQ]. 
\end{align*}

\begin{lemma}\label{lem:Leray}
The diagram
\[ \xymatrix{ 
 & K_0(\Var/X) \ar[dr]^{\;\;\; [Z/X] \mapsto [Z]_\HS} & \\ 
K_0(\FinEt/X) \ar[ur]\ar[dr]& & K_0(\HS) \\
 & K_0(\GVSH/X) \ar[ur]_{\;\;\; \Vc \mapsto \chi_\HS(\Vc)} & 
} \]
commutes. 
\end{lemma}
\begin{proof}
This follows from compatibility with the Leray spectral sequence in \cite{Arapura-LerayMotivic}.
\end{proof}

\begin{remark}
By the same argument, for any $f: Z \rightarrow X$ smooth and proper we have 
\[ \chi_\HS ([Rf_* \QQ]) = [Z]_\HS. \]
\end{remark}

\section{Algebraic probability theory}\label{sec:Probability}
In this section we develop the notion of an algebraic probability measure. For our applications the key concept we must define is that of asymptotic independence, and we build up only the minimal amount of theory necessary in order to accomplish this. The idea of generalizing classical probability theory by putting the emphasis on the ring of random variables rather than the underlying probability space is not new (it is used, e.g., in free probability), however, we are not aware of another source that develops the material completely free of any analytic notions (i.e. for algebras of random variables over an arbitrary ring instead of $\RR$). 

Let $R$ be a ring.

\begin{definition} An \emph{algebraic $R$-probability measure}\footnote{Note that there is no actual measure in the classical sense here; instead we are thinking of a measure as equivalent to the corresponding integration functional, which, for a probability space, is the expectation.} $\mu$ on an $R$-algebra $A$ with values in an $R$-algebra $A'$ is an $R$-module map
\[ \EE_\mu: A \rightarrow A' \]
sending $1_{A}$ to $1_{A'}$. 
\end{definition}

In this setting, we will refer to elements of $A$ as \emph{random variables} and to the map $\EE_\mu$ as the \emph{expectation}. When the measure is implicit, we will sometimes write $\EE$ without the subscript. 

\begin{example}
If $(Y,\mu)$ is a finite probability space in the classical sense (i.e. $Y$ is a finite set and $\mu$ is a real measure on $X$ with $\mu(X)=1$), then we obtain an algebraic $\RR$-valued probability measure on $\mathrm{Map}(Y, \RR)$ with values in $\RR$ sending a random variable 
\[ X \in \mathrm{Map}(Y, \RR) \]
to
\[ \EE_\mu[X] := \int_\mu X = \sum_{y \in Y} X(y) \mu( \left\{ y \right\} ). \] 
\end{example}

\begin{example}\label{example:GrothProb}
If $S/\KK$ is a variety with $[S]$ invertible in $\Mc_\LL$, then we obtain an algebraic $K_0(\Var/\KK)$-probability measure on $K_0(\Var/S)$ with values in $\Mc_\LL$ (cf. Section \ref{sec:GrothendieckRing} for the notation on Grothendieck rings) such that, for $Y/S$ quasi-projective, 
\[ \EE_\mu\left[ [Y/S] \right ] \mapsto \frac{[Y]}{[S]}. \]

If $\KK$ is a finite field $\FF_q$, then for any $f$ this measure specializes to the classical uniform probability measure on the finite set $S(\FF_{q})$: for $Y/S$, the random variable $[Y/S]$ specializes to the random variable on $S(\FF_q)$ given by
\[ s \mapsto \# Y_s(\FF_q).\]
Thus this measure gives a natural motivic lift of the uniform measure on the set of points of a variety.  

If $\KK$ is of characteristic zero, then for any variety $[Y/S]$, we obtain a map 
\begin{align*}
\Lambda & \rightarrow  K_0(\Var / S) \\
f & \mapsto  (f, [Y/S]) 
\end{align*}
and, if we pull-back the measure via this map, we obtain an algebraic $\ZZ$-probability measure on $\Lambda$ with values in $\Mc_\LL$. We will often use this construction when we discuss stabilization (with $\Mc_\LL$ replaced with $\motcomp$ or its image under a motivic measure). We think of this as a type of uniform probability measure, but with values in a complicated ring that often remembers more subtle information.
\end{example}

\begin{definition} A set $\{ a_i\}_{i\in I}$ of elements $a_i \in A$ is \emph{independent} if, for any finite subset $\{i_1,...,i_l\} \in I$ and $k_1,.., k_l \in \ZZ_{\geq1}$,
\[ \EE[a_{i_1}^{k_1} \cdot ... \cdot a_{i_l}^{k_l}] = \EE[a_{i_1}^{k_l}]\cdot ... \cdot \EE[a_{i_l}^{k_l}] \]
\end{definition}

\begin{definition} A sequence of probability measures $\{\mu_j\}$ on $A$ with values in $A'$ a separated topological ring \emph{converges} to a measure $\mu_\infty$ if, for every $a \in A$,
\[ \lim_{j \rightarrow \infty} \EE_{\mu_j} [ a ] = \EE_{\mu_\infty}[a]. \]
\end{definition}

In this setting, we will be concerned with random variables that may not be independent for any of the measures $\mu_j$, but behave as independent random variables in the limit. 

\begin{definition} If $\{\mu_j\}$ is a sequence of probability measures on $A$ with values in $A'$ a complete topological ring, a subset $\{ a_i\}_{i\in I}$ of elements $a_i \in A$ is \emph{asymptotically independent} if for any finite subset $\{i_1,...,i_l\} \in I$ and $k_1,.., k_l \in \ZZ_{\geq1}$,
\[ \lim_{j\rightarrow \infty} \EE_{\mu_j}[a_{i_1}^{k_1} \cdot ... \cdot a_{i_l}^{k_l}] = \left( \lim_{j \rightarrow \infty} \EE_{\mu_j}[a_{i_1}^{k_l}] \right) \cdot ... \cdot \left( \lim_{j \rightarrow \infty} \EE[a_{i_l}^{k_l}] \right)\]
and all of these limits exist. 
\end{definition}

In particular, if we have a sequence of measures $\mu_j$ on the polynomial ring 
\[ R[x_1, x_2,...], \] 
and the variables $x_i$ are asymptotically independent then the measures $\mu_j$ converge to a $\mu_\infty$, and for any 
\[ g \in R[x_1, x_2,...], \]
$\EE_{\mu_\infty}[g]$ can be expressed in terms of the moments of the $x_i$ by writing $g$ as a sum of monomials and applying asymptotic independence.  

Just as we understand independence in terms of joint moments, we will also understand distributions in terms of moment generating functions. Because our applications all involve binomial and Bernoulli random variables, we will use falling moment generating functions. Because we need denominators for our moment generating functions, when we discuss these types of random variables we will assume that $R$ is a $\QQ$-algebra. 

\begin{definition} \hfill
\begin{enumerate}
\item A \emph{Bernoulli random variable} with probability $p \in A'$ is a random variable $a \in A$ such that (for $t$ a formal variable)
\[ \EE[(1+t)^a] = 1+ p t. \]
\item A \emph{binomial random variable}, the ``sum of'' $s \in A'$ independent Bernoulli random variables with probability $p \in A'$, is a random variable $a\in A$ such that (for $t$ a formal variable)
\[ \EE[(1+t)^a]=(1+p t)^s. \]
\end{enumerate}
\end{definition}

\begin{remark}\label{rem:multinomialMoments}
For a binomial random variable $a \in A$, the ``sum of'' $s \in A'$ independent Bernoulli random variables with probability $p \in A'$, we have for any $m$ and $t_1,...,t_m$ formal variables, 
\[ \EE[(1+t_1 + t_2 + +... +t_m)^a]=\left(1+p(t_1 +... + t_m)\right)^s, \]
which can be seen by expressing the multinomial coefficients of the exponents appearing on each side as integer multiples of binomial coefficients of the exponents. We will use this fact later in the proofs of our main theorems. 
\end{remark}

\section{Proofs of Theorem \ref{thm:VirtualLocHS} and Corollary \ref{cor:RelativeStab}}\label{sec:Proof}

In this section, we state and prove the general versions of Theorem \ref{thm:VirtualLocHS} and Corollary \ref{cor:RelativeStab} alluded to in Remark \ref{rmk:ChenGetzler}. 

\subsection{Motivic analogs of local systems attached to character polynomials}\label{subsec:MotAnalogCharPoly}
Let $Y/\CC$ be a smooth quasi-projective variety. We begin by constructing a motivic analogs of the local system on $\Conf^n (Y)$ attached to a character polynomial ${p \in \QQ[X_1,X_2,...]}$. 

By Lemma \ref{lem:Leray}, the following diagram commutes
\[ \xymatrix{
	K_0(S_n-\Set) \ar[r]\ar[dd]	& K_0(\FinEt / \Conf^n (Y)) \ar[dd]_{[f:Z \rightarrow \Conf^n (Y)] \mapsto [f_*\QQ]} \ar[r] & K_0(\Var / \Conf^n (Y)) \ar[d]^{[Z/\Conf^{n}(Y)] \mapsto [Z]_\HS} \\
	&			& K_0(\HS) \\
	K_0(\Rep S_n) \ar[r] &	 K_0(\GVSH / \Conf^n (Y)) \ar[ur]_{\;\;\;\;\;\;\; [\Vc] \mapsto \chi_\HS([\Vc])} & 
		} \]
Here, the two horizontal maps from the leftmost column are coming from the $S_n$-cover $\PConf^n (Y) \rightarrow \Conf^n (Y)$. All of the maps except those to $K_0(\HS)$ are maps of pre-$\lambda$ rings. 

If we denote by $V_n$ the permutation representation on $\{1,...,n\}$, the map from the ring $\QQ[X_1,X_2,...]$ of character polynomials to $\Rep S_n \otimes \QQ$ is given by identifying $\QQ[X_1, X_2,...]$ with $\Lambda_\QQ$ via $X_i \mapsto p_i'$ and then sending a character polynomial $p$ to $(p, [V_n]).$ 

On the other hand, the class of the set $\{1,...,n\}$ in $K_0(S_n-\Set)$ maps via the top horizontal arrows to the class $[Z_n / \Conf^n (Y)]$ -- recall that $Z_n$ is the universal configuration of $n$ distinct points 
\[ Z_n =\{ (y, c) \in Y \times \Conf^n (Y) \, | \, y\in c \}. \]

We denote by $\alpha$ the forgetful map from $K_0(\Var/\Conf^n (Y))$ to $K_0(\Var)$. In the notation of Theorem \ref{thm:VirtualLocHS}, we obtain
\[ \chi_\HS(\Vc_{\pi_{p,n}}) = \alpha\left((p, [Z_n / \Conf^n (Y)]) \right)_\HS. \]

Thus, 
\[ [p]_n := (p, [Z_n / \Conf^n (Y)]) \in K_0(\Var/\Conf^n (Y)) \]
provides a natural motivic analog of $\Vc_{\pi_{p,n}}$. Note that we can define these motivic analogs without the smoothness condition on $Y$, which we use only in relating back to the theory of geometric variations of Hodge structure as in \cite{Arapura-LerayMotivic}. 

\subsection{General versions of Theorem \ref{thm:VirtualLocHS} and Corollary \ref{cor:RelativeStab}}
We will want to consider a motivic measure $\phi$ valued in a ring $R$ such that 
\[ \lim_{n \rightarrow \infty} \frac{[\Conf^n (Y)]_\phi}{\LL^{n \dim Y}_\phi} \]
exists and is invertible. This is equivalent to $Y$ satisfying the property $\MSSP_\phi$ of Vakil and Wood \cite{VakilWood-Discriminants} plus the limit of (normalized) symmetric powers being invertible; we denote this condition by $\MSSP_\phi^*$. 

\begin{remark}\label{rmk:HS-MSSP} If $\phi$ is the Hodge measure of Example \ref{example:mot-measure}, then every $Y$ satisfies $\MSSP_\phi^*$ (cf. \cite[1.26-(ii)]{VakilWood-Discriminants}).
\end{remark}

If $\phi$ is a motivic measure such that $Y$ satisfies $\MSSP_\phi^*$ then, for $n$ sufficiently large, we can define an algebraic probability measure $\EE_{\phi,n}$ on $\Lambda$ with values in $R_\QQ$ by
\[ \EE_{\phi,n}[ p ] = \alpha( [p]_n )_\phi / [\Conf^n (Y)]_\phi \]
where as before $\alpha$ denotes the forgetful map from $K_0(\Var/\Conf^n (Y))$ to $K_0(\Var)$. This is the pullback of the uniform measure on $\Conf^n (Y)$ as in Example \ref{example:GrothProb} by the map $\Lambda \rightarrow K_0(\Var / \Conf^n (Y))$ sending $p$ to $(p, [Z_n])$. 

We denote by 
\[ M_k([Y]) = (p_k', [Y]) \]
the exponent of $(1+t^k)^{-1}$ in the naive Euler product\footnote{To our knowledge, this motivic Euler product was first studied by Bourqui~\cite[2.2]{Bourqui-ProduitEulerien}.} for the Kapranov zeta function of $Y$ (cf. Subsection \ref{subsec:GrothPow}). $M_k([Y])$  should be viewed as a motivic analog of the ``closed points of degree $k$'' on a variety over a finite field. 

Using the language of Section \ref{sec:Probability}, the general version of Theorem \ref{thm:VirtualLocHS} is  

\begin{theorem}\label{thm:GeneralMotivicStab}
Let $Y/\CC$ be a quasi-projective variety, and suppose that $Y$ satisfies $\MSSP_\phi^*$. Then, with respect to the sequence of probability measures $\EE_{\phi,n}$ on the ring of character polynomials $\QQ[X_1,X_2,...]$ given above, the character polynomials $X_i$ are asymptotically independent random variables. Furthermore, $X_k$ is asymptotically a binomial random variable, the ``sum of'' $M_k([Y])_\phi$ independent Bernoulli random variables that are $1$ with probability 
\[ \frac{ 1 }{1 + \LL^{k \cdot \dim Y}_\phi}. \]
\end{theorem}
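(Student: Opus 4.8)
The plan is to reduce Theorem~\ref{thm:GeneralMotivicStab} to Lemma~\ref{lem:conf-Euler}, which already packages the combinatorial heart of the matter. First I would reformulate the assertion about moment generating functions in terms of symmetric functions: under the identification $\QQ[X_1,X_2,\ldots]\cong\Lambda$ sending $X_k\mapsto p_k'$, the statement that the $X_k$ are asymptotically independent binomial random variables with the prescribed parameters is equivalent, by Remark~\ref{rem:multinomialMoments} and the discussion of joint moment generating functions in Section~\ref{sec:Probability}, to the single identity
\[
\lim_{n\to\infty}\EE_{\phi,n}\!\left[\prod_k (1+t_k)^{X_k}\right]
= \prod_k\left(1+\frac{t_k}{1+\LL^{k\dim Y}_\phi}\right)^{M_k([Y])_\phi}
\]
in $R_\QQ[[t_1,t_2,\ldots]]$, together with the observation that the right-hand side factors as a product of the individual generating functions (which is immediate). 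Here I am exponentiating in the naive sense as in the statement of Theorem~\ref{thm:VirtualLocHS}. Since $\prod_k(1+t_k)^{X_k}$, when expanded, is a power series whose coefficients are $\ZZ$-linear combinations of the $h_\tau$'s — equivalently of the $c_I$'s, which form a basis of $\Lambda$ — it suffices to compute $\lim_n \EE_{\phi,n}[c_I]$ for every $I$.

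The key computation is then: by definition $\EE_{\phi,n}[c_I] = \alpha((c_I,[Z_n]))_\phi/[\Conf^n Y]_\phi$, and $(c_I,[Z_n/\Conf^n Y]) = [\Conf^{\tau_I}_{\Conf^n Y} Z_n/\Conf^n Y]$ by the relative version of the Vakil--Wood formula recalled in Subsection~\ref{subsec:RelGrothRings}; applying $\alpha$ and then $\phi$ this becomes $[\Conf^{\tau_I\cdot *^{n-|\tau_I|}}Y]_\phi/[\Conf^n Y]_\phi$. So the whole theorem comes down to evaluating
\[
\lim_{n\to\infty}\frac{[\Conf^{\tau_I\cdot *^{n-|\tau_I|}}Y]_\phi}{[\Conf^n Y]_\phi},
\]
which is exactly Corollary~\ref{cor:RelativeStab} (in its general $\phi$-form). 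To prove that, I would take the generating function identity of Lemma~\ref{lem:conf-Euler}, apply $(\;,[Y])$ and then $\phi$ to get $\prod_k(1+\sum_j t_j^k)^{M_k([Y])_\phi} = \sum_I (c_I,[Y])_\phi\, t^I$; dividing through by the generating function $\sum_n [\Conf^n Y]_\phi u^n$ (set all $t_j$ equal to extract the total-degree grading, using that $[\Conf^n Y]_\phi = (c_{1^n},[Y])_\phi$ is recovered by setting $m=1$) and taking the coefficient-wise limit using $\MSSP_\phi^*$ — i.e. that $[\Conf^n Y]_\phi/\LL^{n\dim Y}_\phi$ converges to an invertible limit — produces the claimed stable value with the Bernoulli probability $1/(1+\LL^{k\dim Y}_\phi)$ appearing naturally from $M_k([Y])_\phi$ being the exponent of $(1+t^k)^{-1}$ (after substituting $t_j \mapsto t_j/\LL^{\dim Y}_\phi$ to track the normalization).

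The step I expect to be the main obstacle is the convergence argument: one must show that dividing the formal Euler product $\prod_k(1+\sum_j t_j^k)^{M_k([Y])_\phi}$ by $\sum_n[\Conf^n Y]_\phi u^n$ and passing to the limit $n\to\infty$ is legitimate term-by-term in the completion $\widehat{\Mc_\LL}$ (or its image under $\phi$), with all the normalizing powers of $\LL_\phi$ bookkept correctly so that the Bernoulli probability comes out as $1/(1+\LL^{k\dim Y}_\phi)$ rather than some off-by-a-Tate-twist variant. This requires care with the dimension filtration and with the fact that $M_k([Y])_\phi$ has "dimension" $k\dim Y$, so that the $k$-th factor contributes at the expected rate; the hypothesis $\MSSP_\phi^*$ is precisely what makes the division by the symmetric-power generating function well-defined in the limit. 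Once that bookkeeping is in place, matching the resulting product against the binomial moment generating functions of Section~\ref{sec:Probability} is routine.
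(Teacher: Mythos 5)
Your plan follows essentially the same route as the paper: identify $\EE_{\phi,n}[c_\tau]$ with $[\Conf^{\tau\cdot *^{n-|\tau|}}Y]_\phi/[\Conf^n Y]_\phi$ via the relative configuration space of $Z_n/\Conf^n Y$, use that the $c_\tau$ form a basis of $\Lambda_\QQ$ together with Lemma \ref{lem:conf-Euler} to see that knowing these limits is equivalent to the asserted asymptotic independence and binomial distributions, and then prove the ratio limit from the Euler product of the generalized-configuration generating function by factoring out $\sum_n[\Conf^n Y]s^n = Z_Y(s)/Z_Y(s^2)$. The one place where your proposal stops short is exactly the step you flag as the main obstacle, and it is worth recording how the paper closes it, since no direct dimension-filtration estimate is needed: after introducing a single auxiliary variable $s$ (substituting $t_i\mapsto t_is$ and adding a bare summand $s$ for the label $*$, which keeps the bookkeeping you worry about trivial — no rescaling $t_j\mapsto t_j/\LL^{\dim Y}_\phi$ or "setting all $t_j$ equal" is required), one differentiates in the $t$-variables at $t_\bullet=0$, pulls out $\prod_k(1+s^k)^{(p_k',[Y])}=Z_Y(s)/Z_Y(s^2)$, and observes that the remaining factor $E(s)$ is a \emph{finite} sum of terms whose numerators lie in $\QQ[(p_1',[Y]),\dots,(p_{|\tau|}',[Y])]$ and whose denominators are products of $1+s^{-k}$; hence $E(\LL^{-\dim Y})$ already makes sense in $\motcomp\otimes\QQ$, and the limit statement
\[
\lim_{n\to\infty}\frac{[\Conf^{\tau\cdot *^n}Y]_\phi}{[\Conf^{|\tau|+n}Y]_\phi}=E(\LL^{-\dim Y}_\phi)
\]
is then precisely the content of \cite[Lemma 5.4]{VakilWood-Discriminants}, which is where the hypothesis $\MSSP^*_\phi$ enters. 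So your outline is correct and coincides with the paper's argument, but as written the convergence step is only named, not proved; to complete it you should either reproduce the rationality observation for $E(s)$ and invoke the Vakil--Wood lemma, or supply an equivalent argument in the completed ring.
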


The general version of Corollary \ref{cor:RelativeStab} is

\begin{corollary}\label{cor:GeneralRelativeStab}
In the setting of Theorem \ref{thm:GeneralMotivicStab}, if $\tau = a_1^{l_1}...a_m^{l_m}$ is a partition then in $R_\QQ$,
\begin{multline*}
\lim_{n \rightarrow \infty} \frac{  [\Conf^{\tau \cdot *^{n-|\tau|}} (Y)]_\phi }{ [\Conf^n (Y)]_\phi} = \\
\frac{\partial_{t}^{\tau}|_{t_\bullet = 0}}{\tau!} \prod_k \left( 1 + \frac{1}{1+\LL^{k\dim Y}_\phi} ( t_1^k + t_2^k + ... + t_m^k) \right)^{ M_k([Y])_\phi }. 
\end{multline*}
\end{corollary}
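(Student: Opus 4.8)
The plan is to deduce Corollary~\ref{cor:GeneralRelativeStab} directly from Theorem~\ref{thm:GeneralMotivicStab}, reading the left-hand side as an expectation and the right-hand side as the moment generating function of asymptotically independent binomial random variables. First I would identify the normalized count $[\Conf^{\tau\cdot *^{n-|\tau|}}Y]_\phi/[\Conf^n Y]_\phi$ with $\EE_{\phi,n}[c_\tau]$ via the definition of $\EE_{\phi,n}$ together with the identity $(c_\tau,[Z_n/\Conf^n Y]) = [\Conf^\tau_{\Conf^n Y} Z_n/\Conf^n Y]$ from Subsection~\ref{subsec:RelGrothRings}; pushing forward along the forgetful map $\alpha$ and applying $\phi$ turns this into $[\Conf^{\tau\cdot *^{n-|\tau|}}Y]_\phi$, because the fiber of $Z_n$ over a configuration is the set of $n$ points and choosing an ordered labeled sub-configuration of type $\tau$ and forgetting the ambient scheme recovers a generalized configuration space with $n-|\tau|$ points labeled by the extra symbol $*$. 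So the limit in question is $\EE_{\phi,\infty}[c_\tau]$.

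Next I would express $c_\tau$ in the basis of power-sum symmetric functions — equivalently in the $X_k=p_k'$ coordinates — using Lemma~\ref{lem:conf-Euler}, which states $\prod_k(1+t_1^k+t_2^k+\dots)^{p_k'} = \sum_I c_I t^I$. Reading off the coefficient of the monomial $t^\tau$ (i.e.\ $t_1^{l_1}\cdots t_m^{l_m}$ after renaming the label variables $t_1,\dots,t_m$ to match $a_1,\dots,a_m$) gives $c_\tau = \frac{1}{\tau!}\partial_t^\tau|_{t_\bullet=0}\prod_k(1+\sum_{j=1}^m t_j^k)^{p_k'}$ as an element of $\Lambda_\QQ$, where the exponent $p_k'$ is to be substituted by the symmetric-function element and where only finitely many $k$ and finitely many terms contribute since we differentiate $|\tau|$ times and evaluate at $0$. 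This reduces the corollary to applying $\EE_{\phi,\infty}$ to each coefficient of the power series $\prod_k(1+\sum_j t_j^k)^{X_k}$, which is precisely the shape of the expression appearing in Theorem~\ref{thm:VirtualLocHS}/Theorem~\ref{thm:GeneralMotivicStab}.

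Then I would invoke Theorem~\ref{thm:GeneralMotivicStab}: the $X_k$ are asymptotically independent, so $\EE_{\phi,\infty}$ of a product over $k$ of power series in $X_k$ factors as the product over $k$ of $\EE_{\phi,\infty}$ applied to each factor (using that only finitely many coefficients are involved after the $\tau$-fold differentiation, so convergence in the completed ring $R_\QQ$ is not an issue); and each factor is handled by the binomial moment generating function computation. Concretely, $X_k$ is asymptotically binomial, the ``sum of'' $M_k([Y])_\phi$ Bernoullis with probability $1/(1+\LL^{k\dim Y}_\phi)$, so by Remark~\ref{rem:multinomialMoments}, $\EE_{\phi,\infty}\big[(1+t_1^k+\dots+t_m^k)^{X_k}\big] = \big(1+\tfrac{1}{1+\LL^{k\dim Y}_\phi}(t_1^k+\dots+t_m^k)\big)^{M_k([Y])_\phi}$. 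Multiplying over $k$, applying $\frac{1}{\tau!}\partial_t^\tau|_{t_\bullet=0}$, and matching with $\EE_{\phi,\infty}[c_\tau]$ from the first two steps yields the stated formula.

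The main obstacle I expect is bookkeeping rather than conceptual: one must be careful that applying $\EE_{\phi,\infty}$ commutes with the formal $\tau$-fold differentiation and evaluation at $t_\bullet=0$, and that the factorization of the expectation over $k$ is legitimate — i.e.\ that only finitely many $X_k$ and finitely many monomials in each $X_k$ enter any coefficient being differentiated, so one is really applying asymptotic independence to a finite product of monomials in the $X_k$ and not passing an infinite product through $\EE$ without justification. The other point requiring care is the identification in the first step of the pushforward of the relative configuration space $\Conf^\tau_{\Conf^n Y} Z_n$ with the absolute generalized configuration space $\Conf^{\tau\cdot *^{n-|\tau|}}Y$; this is where the extra label $*$ records the $n-|\tau|$ unlabeled points and where one uses that $\alpha$ simply forgets the structure map. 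Both are routine once set up correctly, so the proof is essentially a translation of Theorem~\ref{thm:GeneralMotivicStab} through Lemma~\ref{lem:conf-Euler}.
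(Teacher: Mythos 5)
Your proposal is circular in the context of this paper, and that is the genuine gap. You deduce Corollary \ref{cor:GeneralRelativeStab} from Theorem \ref{thm:GeneralMotivicStab}, but the paper never proves the theorem independently: its only proof of Theorem \ref{thm:GeneralMotivicStab} is via the equivalence with Corollary \ref{cor:GeneralRelativeStab}, and the corollary is the statement that receives a direct, unconditional proof (the introduction states this explicitly: the random variables $X_k$ for $k\neq 1$ have no direct geometric meaning, so one cannot attack the theorem head-on). What you have written is, in substance, the ``Theorem implies Corollary'' half of the paper's equivalence argument --- the identification of $[\Conf^{\tau\cdot *^{n-|\tau|}}Y]$ with the pushforward of the relative configuration space $\Conf^\tau_{\Conf^n Y} Z_n$, the use of Lemma \ref{lem:conf-Euler} to write $c_\tau$ as a coefficient of $\prod_k(1+t_1^k+\dots+t_m^k)^{p_k'}$, and the binomial moment computation via Remark \ref{rem:multinomialMoments} all appear there, and your finiteness caveats are the right ones --- but as a proof of the corollary it presupposes exactly what the paper obtains \emph{from} the corollary.

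What is missing is the unconditional argument. The paper proves the corollary directly: apply the geometric description of the power structure to $(1+t_1 s+\dots+t_m s+s)^{_\Pow [Y]}$, whose coefficients are the classes $[\Conf^{a_1^{l_1}\cdots a_m^{l_m}\cdot *^n}Y]$; rewrite it by Lemma \ref{lem:EulerCoeffs} as $\prod_k(1+t_1^ks^k+\dots+t_m^ks^k+s^k)^{(p_k',[Y])}$; extract the coefficient of $t^\tau$, factor out $\prod_k(1+s^k)^{(p_k',[Y])}=(1+s)^{_\Pow[Y]}=Z_Y(s)/Z_Y(s^2)$, and observe that the remaining factor $E(s)$ is a rational expression in $s$ with coefficients in $\QQ[(p_1',[Y]),\dots,(p_{|\tau|}',[Y])]$, so that $E(\LL^{-\dim Y}_\phi)$ makes sense; finally invoke \cite[Lemma 5.4]{VakilWood-Discriminants} together with the $\MSSP_\phi^*$ hypothesis to conclude that the limit of $[\Conf^{\tau\cdot *^n}Y]_\phi/[\Conf^{|\tau|+n}Y]_\phi$ equals $E(\LL^{-\dim Y}_\phi)$. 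None of this appears in your proposal, and without it (or some other independent proof of Theorem \ref{thm:GeneralMotivicStab}) your argument does not establish the statement.
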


\begin{example}\label{example:stably-rational}
If $\phi$ is the natural map to $\motcomp$ and $Y$ is stably rational, then by \cite[1.26(i)]{VakilWood-Discriminants}, $Y$ satisfies $\MSSP_\phi$. In fact, $Y$ satisfies $\MSSP_\phi^*$ since the class of any stably rational variety is invertible in $\motcomp$ (it differs from some $\LL^m$ by something lying lower in the dimension filtration). Thus, Theorem \ref{thm:GeneralMotivicStab} and Corollary \ref{cor:GeneralRelativeStab} hold in this case. In particular, because $\phi$ does not factor through Chow motives, this is an application of Theorem \ref{thm:GeneralMotivicStab} that does not also follow from the work of Getzler as described in Appendix \ref{appendix:GetzlerChen}.  
\end{example}

Theorem \ref{thm:VirtualLocHS} and Corollary \ref{cor:RelativeStab} follow from Theorem \ref{thm:GeneralMotivicStab} and Corollary \ref{cor:GeneralRelativeStab} by taking $\phi$ to be the Hodge measure, for which $\MSSP^*_\phi$ holds by Remark \ref{rmk:HS-MSSP}, and using the discussion of Subsection \ref{subsec:MotAnalogCharPoly} to see 
\[ \EE_{\HS, n}[p] =\frac{ \chi_\HS(\Vc_{\pi_{p,n}}) }{[\Conf^n (Y)]_\HS}. \]

\subsection{Proofs of Theorem \ref{thm:GeneralMotivicStab} and Corollary \ref{cor:GeneralRelativeStab}}

\begin{proof}[Proof that Theorem \ref{thm:GeneralMotivicStab} is equivalent to Corollary \ref{cor:GeneralRelativeStab}.]

Using the geometric description of the power structure, we have (similar to Example \ref{example:ConfGen}),
\[ (1 + t_1 + ... + t_m)^{_\Pow [Z_n / \Conf^n (Y)]} = \sum_{\tau=a_1^{l_1}...a_m^{l_m}} \left[\Conf^{\tau}\left( Z_n / \Conf^n (Y) \right) / \Conf^n(Y) \right]  t^\tau. \]
By Lemma \ref{lem:EulerCoeffs}, the left-hand side is also equal in $K_0(\Var/\Conf^n (Y))_\QQ$ to the infinite product
\[ \prod_{k=1}^\infty (1 + t_1^k + ... + t_m^k)^{(p_k', [Z_n / \Conf^n (Y)])}. \]

Observe that the relative configuration space $\Conf^{\tau}\left( Z_n / \Conf^n (Y) \right)$ is isomorphic, as a variety over $\CC$, to 
$\Conf^{\tau \cdot *^{n-|\tau|}} (Y).$ Thus, by applying the forgetful map $\alpha$ to the coefficients of these power series, we find
\[ \alpha\left( \prod_{k=1}^\infty (1 + t_1^k + ... + t_m^k)^{(p_k', [Z_n / \Conf^n (Y)])} \right) = \sum_\tau \left[\Conf^{\tau \cdot *^{n-|\tau|}}(Y)\right] t^\tau. \]
in $K_0(\Var)_\QQ$. In particular, if we fix a $\tau$, we find
\[ \alpha \left( \frac{\partial_{t}^{\tau}|_{t_\bullet = 0}}{\tau!}  \prod_{k=1}^\infty (1 + t_1^k + ... + t_m^k)^{(p_k', [Z_n / \Conf^n (Y)])} \right) = \left[\Conf^{\tau \cdot *^{n-|\tau|}}(Y)\right]. \]
Applying $\phi$ and dividing both sides by $[\Conf^n (Y)]_\phi$ we obtain 
\[ \EE_{\phi}\left[\frac{\partial_{t}^{\tau}|_{t_\bullet = 0}}{\tau!} \prod_{k=1}^\infty (1 + t_1^k + ... + t_m^k)^{(p_k', [Z_n / \Conf^n (Y)])} \right] = \frac{\left[\Conf^{\tau \cdot *^{n-|\tau|}}(Y)\right]_\phi}{[\Conf^n (Y) ]_\phi}. \]

In particular, if Theorem \ref{thm:GeneralMotivicStab} is satisfied, then taking the limit in $n$, we can move the expectation inside the product and use the moments of a binomial distribution as in Remark \ref{rem:multinomialMoments} to obtain
\[ \frac{\partial_{t}^{\tau}|_{t_\bullet = 0}}{\tau!} \prod_{k=1}^\infty \left(1 + \frac{1}{1+\LL^{k \cdot \dim Y }_\phi}(t_1^k + ... + t_m^k)\right)^{(p_k', [Y])} = \lim_{n \rightarrow \infty} \frac{\left[\Conf^{\tau \cdot *^{n-|\tau|}}(Y)\right]_\phi}{[\Conf^n (Y)]_\phi}. \] 
Thus Theorem \ref{thm:GeneralMotivicStab} implies Corollary \ref{cor:GeneralRelativeStab}.

On the other hand, the $c_\tau$ are a basis for $\Lambda_\QQ$, thus there is a unique map of $\QQ$-vector spaces
\[ \Lambda_\QQ \rightarrow R_\QQ \]
sending $c_\tau$ to 
\[ \frac{\partial_{t}^{\tau}|_{t_\bullet = 0}}{\tau!} \prod_{k=1}^\infty \left(1 + \frac{1}{1+\LL^{k\cdot \dim Y}}(t_1^k + ... + t_m^k)\right)^{\phi((p_k', [Y]))} . \]

Using the formula 
\[ c_\tau =  \frac{\partial_{t}^{\tau}|_{t_\bullet = 0}}{\tau!} \prod_{k=1}^\infty (1 + t_1^k + ... + t_m^k)^{p_k'} \]
of Lemma \ref{lem:conf-Euler}, we see that this is also the unique map such that
\[ \prod_{i=1}^{m} \binom{ p'_k }{ l_k } \mapsto
 \prod_{i=1}^{m} \binom { \phi\left( (p_k', [Y])\right) }{ l_k} \left( \frac{1}{1+\LL^{k \cdot \dim Y }_\phi}\right)^{l_k}. \]
(i.e. the map obtained by declaring the $p_k'$ to be independent with the given binomial distributions). 

If we assume Corollary \ref{cor:GeneralRelativeStab}, then by computing with the basis $c_\tau$, we find the map 
\begin{align*} \Lambda_\QQ & \rightarrow R_\QQ \\
g & \mapsto \lim_{n \rightarrow \infty} \EE_{\phi, n}[g] = \lim_{n \rightarrow \infty} \EE_\phi[ (g, [Z_n/\Conf^n (Y)])  ]
\end{align*}
is defined, and equal to map described above, so that Corollary \ref{cor:GeneralRelativeStab} implies Theorem~\ref{thm:GeneralMotivicStab}. 

\end{proof}

\begin{proof}[Proof of Corollary \ref{cor:GeneralRelativeStab}.]

By the description of the power structure on $K_0(\Var)$ in Subsection \ref{subsec:GrothPow}, we have
\[ (1 + t_1 s  + ... + t_m s + s)^{_\Pow [Y]} = \sum_{(l_1,...,l_m,n) \in {\ZZ_{\geq0}^{m+1}}} \left[\Conf^{a_1^{l_1}a_2^{l_2}...a_m^{l_n}\cdot *^n} (Y)\right] t^{l_1}... t^{l_m} s^{n+l_1 +... +l_m}. \]

By Lemma \ref{lem:EulerCoeffs}, this is equal to
\[ \prod_{k=1}^{\infty} (1 + t_1^k s^k + ... + t_m^k s^k + s^k)^{(p_k', [Y])}. \]

Thus, if we fix a partition $\tau=a_1^{l_1}...a_m^{l_m}$, we obtain
\[ \frac{\partial_t^\tau}{\tau!}|_{t_\bullet=0} \prod_k (1 + t_1^k s^k + ... + t_m^k s^k + s^k)^{(p_k', [Y])} = \sum_{n \in \ZZ_\geq 0} \left[\Conf^{\tau \cdot *^n} (Y)\right] s^{|\tau| + n}. \]

Because the differentiation on the left-hand side is with respect to the $t$ variables, we can pull out a factor of
\[ \prod_k (1+s^k)^{(p_k', [Y])} = (1+s)^{_\Pow [Y]} = \frac{(1-s)^{_\Pow -[Y]}}{(1-s^2)^{_\Pow -[Y]}} = \frac{ Z_Y(s)}{ Z_Y(s^2)} \] 
to obtain 
\[ \frac{\partial_t^\tau}{\tau!}|_{t_\bullet=0} \prod_k \left( 1 + \frac{s^k}{1+s^{k}}(t_1^k + ... + t_m^k) \right)^{(p_k', [Y])} \frac{Z_Y(s)}{Z_Y(s^2)} = \sum_{n \in \ZZ_\geq 0} \left[\Conf^{\tau \cdot *^n}(Y)\right] s^{|\tau|+n}. \]

We claim that
\begin{equation}\label{equation:rat-func} E(s):=\frac{\partial_t^\tau}{\tau!}|_{t_\bullet=0} \prod_k \left( 1 + \frac{s^k}{1+s^{k}}(t_1^k + ... + t_m^k) \right)^{(p_k', [Y])} \end{equation}
is a rational function of $s$ which can be written as a finite sum of fractions where the numerators are in 
\[ \QQ[(p_1', [Y]), ..., (p_{|\tau|}', [Y])], \]
and the denominator is a product of factors of the form 
\[ \frac{s^k}{1+s^{k}}. \]
To see this, expand each term with binomial coefficients then expand the product and use that the differentiation is picking out the coefficient of $t^\tau$. 

In particular, $E(\LL^{-\dim Y})$ exists already in $\motcomp \otimes \QQ$, and, applying \cite[Lemma 5.4]{VakilWood-Discriminants}, we see that
\[ \lim_{n \rightarrow \infty} \frac{\left[\Conf^{\tau\cdot*^n} (Y)\right]_\phi}{[\Conf^{|\tau|+n} (Y)]_\phi} = E(\LL^{-\dim Y})_\phi.\]
Evaluating the right-hand side using the expression (\ref{equation:rat-func}) then gives the desired result. 
\end{proof}

\appendix

\section{Work of Chen and Getzler}\label{appendix:GetzlerChen}
In this appendix we show that Theorem \ref{thm:VirtualLocHS} follows from work of Getzler \cite{Getzler-MixedHodge}. In fact, using \cite{Getzler-MixedHodge} we deduce the stronger Theorem \ref{thm:CharPolyGenFunc} below, which is a Hodge-theoretic analog of a result of Chen \cite[Theorem 3]{Chen-ConfPointCounting} for point counting over finite fields (in Theorem \ref{thm:Chen} below we state a version of this result with the notation and the setting modified slightly from \cite{Chen-ConfPointCounting}). From Theorem \ref{thm:CharPolyGenFunc} we can deduce Theorem \ref{thm:VirtualLocHS} using a lemma of Vakil-Wood \cite{VakilWood-Discriminants} (this step is very similar to Chen's deduction of \cite[Corollary 4]{Chen-ConfPointCounting} from \cite[Theorem 3]{Chen-ConfPointCounting}). We note that, as in \cite{Getzler-MixedHodge}, instead of working with the Hodge measure, we could prove the more general Theorem \ref{thm:GeneralMotivicStab} for any $\phi$ factoring through Chow motives.  

For $\overline{l}=(l_1,...l_k)$ a sequence of non-negative integers we define the character polynomial 
\[ \binom{X}{\overline{l}} := \prod_{i=1}^m \binom{X_i}{l_i} \]
where $X_i$ as before is the function counting cycles of length $i$. For a permutation $\sigma$ in of a set $A$,  
\[ \binom{X}{\overline{l}}(\sigma) \]
is the number of subsets consisting of $\sum {i \cdot l_i}$ elements of $A$ on which $\sigma$ acts as a permutation of cycle type $\overline{l}$ (i.e. has $l_1$ 1-cycles, $l_2$ 2-cycles, ...). For $Y$ a variety over $\FF_q$ and $c$ a configuration of $n$ points in $\Conf^n (Y)(\FF_q)$ we denote by $\sigma_c$ the permutation given by $\Frob_q$ acting on the $n$ geometric points of $c$. 

\begin{theorem}[Chen \cite{Chen-ConfPointCounting}]
\label{thm:Chen}
Let $Y$ be a smooth connected quasi-projective variety over $\FF_q$ and define 
\[ M_k(Y,q) := \sum_{d|k} \frac{1}{d} \mu(k/d) |Y(\FF_{q^d})| \]
(the number of closed points of degree $k$ on $Y$). For any sequence of non-negative integers $\overline{l}=(l_1,...,l_m)$, 
\[ \sum_{n=0}^\infty \left [\sum_{c \in \Conf^n (Y)(\FF_q)} \binom{X}{\overline{l}}(\sigma_c) \right] t^n = \frac{Z(Y,t)}{Z(Y,t^2)}\cdot \prod_{k=1}^m \binom {M_k(Y,q)}{l_k} \left(\frac{t^k}{1+t^k}\right)^{l_k}. \]
\end{theorem}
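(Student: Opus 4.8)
The plan is to reduce this to the point-counting specialization of the ``motivic'' computation already carried out in the proof of Corollary~\ref{cor:GeneralRelativeStab}, rather than to give a direct combinatorial argument. First I would recall from Subsection~\ref{subsec:GrothPow} and the proof of Corollary~\ref{cor:GeneralRelativeStab} that, working in $K_0(\Var/\FF_q)_\QQ$, the geometric description of the power structure gives
\[ (1 + t_1 s + \dots + t_m s + s)^{_\Pow [Y]} = \sum_{(l_1,\dots,l_m,n)} [\Conf^{a_1^{l_1}\cdots a_m^{l_m}\cdot *^n} Y]\, t_1^{l_1}\cdots t_m^{l_m}\, s^{n+l_1+\dots+l_m}, \]
and that by Lemma~\ref{lem:EulerCoeffs} this equals $\prod_k (1 + t_1^k s^k + \dots + t_m^k s^k + s^k)^{(p_k', [Y])}$. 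Pulling out the factor $\prod_k (1+s^k)^{(p_k',[Y])} = Z_Y(s)/Z_Y(s^2)$ and extracting the coefficient of $t^\tau$ for $\tau = a_1^{l_1}\cdots a_m^{l_m}$ gives, exactly as in that proof,
\[ \sum_{n\geq 0} [\Conf^{\tau\cdot *^n} Y]\, s^{|\tau|+n} = \frac{Z_Y(s)}{Z_Y(s^2)} \cdot \frac{\partial_t^\tau}{\tau!}\Big|_{t_\bullet=0} \prod_k \left(1 + \frac{1}{1+s^{-k}}(t_1^k + \dots + t_m^k)\right)^{(p_k',[Y])}. \]

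Next I would apply the point-counting motivic measure $\phi_q$ to this identity. Under $\phi_q$ we have $[Y]\mapsto |Y(\FF_q)|$, $Z_Y(s)\mapsto Z(Y,s)$ (the Weil zeta function, since $\phi_q$ sends $[\Sym^k Y]$ to $|\Sym^k Y(\FF_q)|$ and $Z(Y,s)$ is multiplicative), and $(p_k',[Y])\mapsto M_k(Y,q)$ by the very definition of $M_k$ as the Euler exponent $\phi_q((p_k',[Y]))$ — this matches the Möbius formula $M_k(Y,q)=\frac{1}{k}\sum_{d|k}\mu(k/d)|Y(\FF_{q^d})|$ (note: the statement as written has a typo, $|Y(\FF_{q^m})|$ should be $|Y(\FF_{q^d})|$). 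The left-hand side becomes $\sum_n |\Conf^{\tau\cdot*^n} Y(\FF_q)|\, s^{|\tau|+n}$; after relabeling $|\tau|+n$ as the running index and dividing by $s^{|\tau|}$ this is the generating function $\sum_{N\geq 0}|\Conf^{\tau\cdot *^{N-|\tau|}}Y(\FF_q)|\,t^N$ appearing on the left of Theorem~\ref{thm:Chen} (with $t$ in place of $s$).

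The remaining step is the purely combinatorial identification of the two descriptions of this coefficient. On the one hand, $|\Conf^{\tau\cdot *^{N-|\tau|}}Y(\FF_q)|$ counts $\FF_q$-configurations of $N$ geometric points with a labelling of a subset of size $|\tau|$ by $a_1,\dots,a_m$ with multiplicities $l_1,\dots,l_m$; since such a labelling must be $\Frob$-stable and each label block is then a union of $\Frob$-orbits, counting the labellings orbit-by-orbit shows this equals $\sum_{c\in\Conf^N Y(\FF_q)} \binom{X}{\overline{l}}(\sigma_c)$, precisely the bracketed quantity on the left of Theorem~\ref{thm:Chen}. On the other hand I must check that $\frac{\partial_t^\tau}{\tau!}|_{t_\bullet=0}\prod_k (1 + \frac{1}{1+s^{-k}}(t_1^k+\dots+t_m^k))^{M_k(Y,q)}$ equals $\prod_{k=1}^m \binom{M_k(Y,q)}{l_k}\left(\frac{s^k}{1+s^k}\right)^{l_k}$: expanding each factor by the binomial series, the coefficient of $t_1^{l_1}\cdots t_m^{l_m}$ forces, for each $k$, the $k$-th power sum $t_1^k+\dots+t_m^k$ to be used exactly $l_k$ times distributed one per variable-index-class, and since $\frac{1}{1+s^{-k}} = \frac{s^k}{1+s^k}$, the coefficient is the stated product (the multinomial-versus-binomial bookkeeping here is exactly Remark~\ref{rem:multinomialMoments}). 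Assembling these pieces and restoring the prefactor $Z(Y,s)/Z(Y,s^2)$ yields the claimed identity. The main obstacle is the first combinatorial identification — verifying that counting $\Frob$-stable partial labellings of a configuration reproduces $\binom{X}{\overline{l}}(\sigma_c)$ summed over configurations — which requires being careful about how a geometric configuration that is $\Frob$-stable as a whole decomposes, and about the correspondence between $\FF_q$-points of $\Conf^{\tau\cdot*^n}Y$ and pairs (configuration, compatible labelling); everything else is formal manipulation of the identities already established in the body of the paper.
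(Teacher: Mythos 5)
A preliminary remark: the paper does not prove Theorem \ref{thm:Chen} at all --- it is quoted from Chen, and even the Hodge-theoretic analogue (Theorem \ref{thm:CharPolyGenFunc}) is proved by a different route (Getzler's formula plus inner products of symmetric functions), precisely because no single generalized configuration space is an avatar of $\binom{X}{\overline{l}}$. Your proposal is therefore necessarily an independent argument, and it contains a genuine gap at exactly the two final identification steps, which fail for the same reason: the class $[\Conf^{\tau\cdot *^{n}}Y]$ realizes the character polynomial $c_\tau$ of Lemma \ref{lem:conf-Euler}, not $\binom{X}{\overline{l}}=\prod_k\binom{X_k}{l_k}$; these are different, only triangularly related, bases of $\Lambda_\QQ$. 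Concretely, an $\FF_q$-point of $\Conf^{a^2\cdot *^{n}}Y$ is a configuration together with a $\Frob$-stable two-element subset labelled $a$, and that subset may consist of two rational points \emph{or} of a single closed point of degree $2$; hence $|\Conf^{a^2\cdot *^{n}}Y(\FF_q)|=\sum_c\bigl(\binom{X_1}{2}+X_2\bigr)(\sigma_c)$, not $\sum_c\binom{X_1}{2}(\sigma_c)$. Your phrase ``each label block is a union of $\Frob$-orbits'' is true, but those orbits need only have total size $l_i$; nothing ties the label index $i$ to the degree of the closed points in the block. The coefficient-extraction claim fails in the same way: the coefficient of $t_1^2$ in $\prod_k\bigl(1+\tfrac{s^k}{1+s^k}t_1^k\bigr)^{M_k}$ is $\binom{M_1}{2}\bigl(\tfrac{s}{1+s}\bigr)^2+M_2\tfrac{s^2}{1+s^2}$, not $\binom{M_1}{2}\bigl(\tfrac{s}{1+s}\bigr)^2$, and for $\overline{l}=(1,1)$ the coefficient of $t_1t_2$ is $M_1(M_1-1)\bigl(\tfrac{s}{1+s}\bigr)^2$ rather than $M_1M_2\tfrac{s}{1+s}\tfrac{s^2}{1+s^2}$, since there is no mechanism forcing the $k$-th factor to feed only the $k$-th variable. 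The exponent bookkeeping shows the same mismatch: Chen's right-hand side carries $t^{\sum_k k l_k}$ for the marked points, while your labelled points contribute only $s^{\sum_k l_k}$.

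What you have correctly established (the power-structure identity over $\FF_q$ in $\widetilde{K_0}$, the factorization of $Z_Y(s)/Z_Y(s^2)$, and the specializations $Z_Y\mapsto Z(Y,\cdot)$, $(p_k',[Y])\mapsto M_k(Y,q)$) is the point-counting version of the identity in the proof of Corollary \ref{cor:GeneralRelativeStab}, i.e.\ the generating function for generalized configuration spaces --- a true but different statement. To reach Chen's theorem from there you would still have to invert the change of basis between the $c_\tau$ and the $\prod_k\binom{X_k}{l_k}$, which is the substantive content you have skipped. The shorter repair is a direct factorization over closed points of $Y$: choosing the $l_k$ marked closed points of degree $k$ and then independently including or excluding every other closed point gives $\sum_n\bigl[\sum_{c}\binom{X}{\overline{l}}(\sigma_c)\bigr]t^n=\prod_{k\le m}\binom{M_k(Y,q)}{l_k}t^{k l_k}(1+t^k)^{M_k(Y,q)-l_k}\cdot\prod_{k>m}(1+t^k)^{M_k(Y,q)}$, which is Chen's formula after rewriting $\prod_k(1+t^k)^{M_k(Y,q)}=Z(Y,t)/Z(Y,t^2)$. (Your observations that the statement's $\FF_{q^m}$ should read $\FF_{q^d}$, with the factor $1/k$ outside the sum, are correct but cosmetic.)
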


We show 

\begin{theorem}\label{thm:CharPolyGenFunc}
Let $Y/\CC$ be a smooth connected quasi-projective variety. For any sequence of non-negative integers $\overline{l}=(l_1,...,l_m)$, 
\[ \sum_{n=0}^\infty \chi_\HS\left(\Vc_{\pi_{\binom{X}{\overline{l}},n}}\right) t^n = \frac{Z_{Y,\HS}(t)}{Z_{Y,\HS}(t^2)} \cdot \prod_{k=1}^m \binom{M_k([Y]_\HS)}{l_k} \left(\frac{t^k}{1+t^k}\right)^{l_k}. \] 
\end{theorem}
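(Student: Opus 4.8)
The plan is to repackage the left-hand side through the motivic avatars of Subsection~\ref{subsec:MotAnalogCharPoly} and then read the product formula off of Getzler's equivariant computation \cite{Getzler-MixedHodge}; I will also indicate a more elementary route, parallel to the proof of Lemma~\ref{lem:conf-Euler}, which instead borrows Chen's Theorem~\ref{thm:Chen}.

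First I would reformulate the statement. Multiplying the claimed identity by $u^{\overline l}=\prod_k u_k^{l_k}$ and summing over all sequences $\overline l$ of nonnegative integers, and using $\chi_\HS(\Vc_{\pi_{p,n}})=\phi_\HS\big(\alpha((p,[Z_n/\Conf^n Y]))\big)$ together with $\binom{X}{\overline l}\leftrightarrow\prod_k\binom{p_k'}{l_k}$ under $\QQ[X_1,X_2,\dots]\cong\Lambda$, the left side becomes $\sum_n\phi_\HS\big(\alpha((\prod_k(1+u_k)^{p_k'},[Z_n/\Conf^n Y]))\big)t^n$. Absorbing the zeta factor on the right via $\frac{Z_{Y,\HS}(t)}{Z_{Y,\HS}(t^2)}=(1+t)^{_\Pow [Y]_\HS}=\prod_k(1+t^k)^{M_k([Y]_\HS)}$ (Lemma~\ref{lem:EulerCoeffs}), one sees that the theorem is equivalent to the single identity
\[
\sum_{n\ge 0}\phi_\HS\!\left(\alpha\!\left(\Big(\prod_{k\ge1}(1+u_k)^{p_k'},\ [Z_n/\Conf^n Y]\Big)\right)\right)t^n\ =\ \prod_{k\ge 1}\big(1+t^k+t^k u_k\big)^{M_k([Y]_\HS)}
\]
in $K_0(\HS)_\QQ[[t,u_1,u_2,\dots]]$; each factor on the right is the ``closed point of degree $k$'' local factor ($1$: absent from the configuration; $t^k$: present and unmarked; $t^k u_k$: present and marked). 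As $\phi_\HS$ is a morphism of pre-$\lambda$ rings with $\phi_\HS((p_k',[Y]))=M_k([Y]_\HS)$, it suffices to prove the same identity in $K_0(\Var)_\QQ[[t,u_\bullet]]$ with exponents $(p_k',[Y])$.

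To establish that motivic identity I would feed in Getzler's formula for the symmetric-function-valued series $\sum_n\chi_\HS^{S_n}(\PConf^n Y)\,t^n$ attached to the ordered configuration spaces: it takes an Euler-product shape, a product over cycle lengths $k$ in the power sums $p_k$ with exponents built from the Adams operations $\psi^k[Y]$ (equivalently the $(p_k',[Y])$). Pairing it against $\binom{X}{\overline l}=\prod_k\binom{p_k'}{l_k}$, the $k$-th binomial interacts only with the $k$-th Euler factor, and extracting the part of that factor with exactly $l_k$ marked degree-$k$ points reproduces $\binom{(p_k',[Y])}{l_k}(t^k/(1+t^k))^{l_k}$, the unmarked remainder assembling into $\sum_n[\Conf^n Y]\,t^n$; this is a bookkeeping computation with multinomial coefficients that mirrors Chen's finite-field manipulation in Theorem~\ref{thm:Chen}. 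Alternatively one can avoid Getzler: the proof of Corollary~\ref{cor:GeneralRelativeStab} already yields, \emph{before any limit is taken}, the generating function for the colored-configuration local systems (the basis $\{c_\tau\}$ of $\Lambda_\QQ$), and the passage to the basis $\{\binom{X}{\overline l}\}$ is again this Euler-product resummation; since, moreover, both sides of the motivic identity are coefficientwise in $t$ of the form $(g,[Y])$ for a fixed $g\in\Lambda_\QQ$, Lemma~\ref{lem:ZeroPointCount} reduces it to its point-counting specialization, which is exactly Theorem~\ref{thm:Chen}.

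I expect the main obstacle to be, in the Getzler route, faithfully translating his conventions — compactly supported versus ordinary cohomology, the Koszul signs in the $S_n$-action on tensor powers, and the plethystic exponential — into the power-sum/pre-$\lambda$ language used here, and then carrying out the pairing so that the product over $k$ separates cleanly; in the elementary route the obstacle is instead checking that the relevant coefficients lie in the image of $\Lambda_\QQ$ and matching the differentiations $\partial_t^\tau|_{t_\bullet=0}$ (which isolate colored configurations) against the cycle-type grading of character polynomials, all while keeping the infinite products convergent in the weight-completed ring. In either case the final step — applying $\phi_\HS$ and rewriting $\prod_k(1+t^k+t^ku_k)^{M_k([Y]_\HS)}$ as $\frac{Z_{Y,\HS}(t)}{Z_{Y,\HS}(t^2)}\prod_k\binom{M_k([Y]_\HS)}{l_k}(t^k/(1+t^k))^{l_k}$ and reading off the coefficient of $u^{\overline l}$ — is routine.
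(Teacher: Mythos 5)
Your first (Getzler) route is essentially the paper's own argument: the appendix pairs the class function $\binom{X}{\overline l}=\sum_{\tau\vdash n}\binom{X}{\overline l}(\tau)\,p_\tau/z_\tau$ against the degree-$n$ part of Getzler's series $\prod_k(1+p_k)^{M_k([Y]_\HS)}$, uses orthogonality of the power sums to convert this into $\partial_{z_1}^{l_1}\cdots\partial_{z_m}^{l_m}\prod_k(1+t^kz_k)^{M_k}\big|_{z_\bullet=1}$, and then extracts the zeta quotient via $(1+t^k)=(1-t^k)^{-1}/(1-t^{2k})^{-1}$ --- exactly the bookkeeping and final step you describe, so your $u$-variable repackaging is only cosmetic. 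Your second route is genuinely different from the appendix's proof, though it mirrors the main text's proof of Lemma \ref{lem:conf-Euler}: it trades \cite{Getzler-MixedHodge} for Chen's Theorem \ref{thm:Chen} together with Lemma \ref{lem:ZeroPointCount}. To close it you must make explicit the step you only flag: each $t^n$-coefficient of your left-hand side must be exhibited as $(g_n,[Y])$ for a universal $g_n\in\Lambda_\QQ$, which follows by expanding $\binom{X}{\overline l}$ in the basis $\{c_\tau\}$ and using $\alpha\bigl((c_\tau,[Z_n/\Conf^n Y])\bigr)=[\Conf^{\tau\cdot *^{n-|\tau|}}Y]=(c_{\tau\cdot *^{n-|\tau|}},[Y])$; and its point count over $\FF_q$ must be identified with $\sum_{c\in\Conf^n Y(\FF_q)}\binom{X}{\overline l}(\sigma_c)$ by counting Frobenius-stable labelings (the comparison must run through $\Lambda_\QQ$, since the relative pre-$\lambda$ structure you invoke is only defined in characteristic zero). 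With that filled in, the Chen route is correct and buys independence from Getzler, at the cost of importing \cite{Chen-ConfPointCounting} --- a reasonable alternative, though the appendix's stated aim is precisely to derive the statement from Getzler.
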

\begin{remark}
Theorem \ref{thm:CharPolyGenFunc} is a Hodge-theoretic analog of Theorem \ref{thm:Chen}, as can be seen by interpreting the left-hand side of the formula in Theorem \ref{thm:Chen} using the Grothendieck-Lefschetz formula as the trace of Frobenius acting on the virtual $l$-adic local system attached to the character polynomial $\binom{X}{\overline{l}}.$  
\end{remark}

\begin{proof}
By \cite[Theorem 5.6 and formula (2.6)]{Getzler-MixedHodge} and the discussion of Subsection \ref{subsec:MotAnalogCharPoly}, we see that the coefficient of $t^n$ on the left hand side is equal to the inner product of the class function on $S_n$ 
\[ \binom{X}{\overline{l}}, \]
viewed as a degree $n$ symmetric function in $\Lambda$, with the degree $n$ component of 
\[ \prod_{k=1}^\infty  (1+p_k)^{M_k([Y]_\HS)}.\]
The class function attached to $\binom{X}{\overline{l}}$, viewed as a degree $n$ symmetric function, is equal to
\[ \sum_{\tau \vdash n} \binom{X}{\overline{l}}(\tau) \cdot p_\tau/z_\tau. \]
where partitions $\tau$ are thought of as conjugacy classes in $S_n$ and $z_\tau$ is the order of the centralizer of an element of the conjugacy class $\tau$. 

Because the $p_\tau$ are an orthogonal basis with $\langle p_\tau, p_\tau/z_\tau \rangle=1$, we find
that the left hand side is equal to 
\[ \frac{\partial}{\partial z_1}^{l_1} ... \frac{\partial}{\partial z_m}^{l_m} \prod_{k=1}^\infty  (1+ t^k \cdot z_k)^{M_k(Y)} |_{z_1 = z_2 = ... =z_m=1}. \]
We can rewrite this as 
\[ \left(\prod_{k=1}^\infty (1 + t^k)^{M_k(V)}\right) \cdot \prod_{k=1}^m \binom{M_k(Y)}{l_k} 
\left(\frac{t^{k}}{1+t^k}\right)^ {l_k} \]

The zeta factors come from the left-hand side using the identity 
\[ (1+t^k)=(1-t^k)^{-1}/(1-t^{2k})^{-1} \]
and the fact that the $M_k([Y]_\HS)$ are \emph{defined} as the exponents of the Euler product of $Z_{Y,\HS}(t)$. The identification of the zeta quotient with the series of configuration spaces is standard, and holds already in the Grothendieck ring of varieties. 
\end{proof}

Theorem \ref{thm:VirtualLocHS} follows from Theorem \ref{thm:CharPolyGenFunc} by applying \cite[Lemma 5.4]{VakilWood-Discriminants}.  

\bibliography{references}
\bibliographystyle{plain}

\end{document}